\documentclass{IEEEtran}
\usepackage{cite}
\usepackage{amsmath,amssymb,amsfonts}
\usepackage{algorithmic}
\usepackage{graphicx}
\usepackage{textcomp}
\usepackage{algorithm,algorithmic}
\usepackage{hyperref}
\usepackage{subfigure}

\newtheorem{assumption}{Assumption}

\newtheorem{remark}{Remark}
\newtheorem{theorem}{Theorem}
\newtheorem{proof}{Proof}

\hypersetup{hidelinks=true}
\usepackage {subcaption}
\usepackage{color}
\def\BibTeX{{\rm B\kern-.05em{\sc i\kern-.025em b}\kern-.08em
		T\kern-.1667em\lower.7ex\hbox{E}\kern-.125emX}}
\begin{document}
	\title{Distributed Optimal Consensus of Nonlinear Multi-Agent Systems: A Unified Approach for Leaderless and  Leader-follower}
	\author{Ziyuan Guo, Chuanzhi lv, Liping Zhang and Huanshui Zhang, \IEEEmembership{Senior Member, IEEE}
		\thanks{This work was supported by  the Original Exploratory Program Project of National Natural Science Foundation of China (62450004) and the Joint Funds of the National Natural Science Foundation of China (U23A20325).  }
		\thanks{Ziyuan Guo, Chuanzhi Lv, and Liping Zhang are with the College of
			Electrical Engineering and Automation, Shandong University of Science
			and Technology, Qingdao 266590, China (e-mail: skdgzy@sdust.edu.cn; lczbyq@sdust.edu.cn; lpzhang1020@sdust.edu.cn).}
		\thanks{Huanshui Zhang is with the College of Electrical Engineering and
			Automation, Shandong University of Science and Technology, Qingdao
			266590, China, and also with the School of Control Science and
			Engineering, Shandong University, Jinan, Shandong 250061, China (email: hszhang@sdu.edu.cn).}
	}
	
	\maketitle
	
	\begin{abstract}
		In this paper, the optimal consensus problem for general nonlinear multi-agent systems is studied, where both leaderless and leader-follower cases are considered in a unified framework. The key idea is to convert consensus problems into optimal control problems where the objective of each agent with nonlinear dynamics is to design the control input minimizing the global consensus cost function. Compared with the existing distributed consensus control for nonlinear multi-agent systems, we propose a distributed optimal consensus algorithm  based on the optimal control principle (OCP) method, and two enhanced algorithms are developed under the model predictive control (MPC) framework—these two algorithms demonstrate broader applicability when handling general nonlinear multi-agent systems. Moreover, the convergence and superlinear convergence rate of the proposed algorithms are rigorously analyzed. Numerical simulations demonstrate the effectiveness of the proposed algorithms.
	\end{abstract}
	
	\begin{IEEEkeywords}
		Optimal consensus, optimal control, maximum principle, nonlinear multi-agent system, model predictive control.
	\end{IEEEkeywords}
	
	\section{Introduction}
	\IEEEPARstart{O}{ver} the past few decades, consensus in multi-agent systems has emerged as a rapidly growing topic across various research fields due to its wide range of applications, such as agents formation \cite{bib1}, \cite{bib2}, sensor networks \cite{bib3}, distributed optimization \cite{bib4}, \cite{bib5}, and smart grids \cite{bib6}, \cite{bib7}. The key to the consensus for each individual in a multi-agent system lies in designing a distributed control protocol based solely on its own and neighboring agents' available information that ensures state or output consensus. From the perspective of leader existence, such control problems can be further categorized into leaderless consensus control \cite{bib8} and leader-following consensus control, as seen in \cite{bib9}, \cite{bib10}. 
	
	Leaderless consensus has attracted significant research attention due to its fully decentralized nature and high scalability. It typically refers to scenarios where, in the absence of a pre-designated leader, all agents in the system achieve a common state or output through distributed control protocols. Over the past few years, the leaderless consensus problem has been investigated by many scholars. For linear multi-agent systems, extensive research has been conducted in this area, as evidenced by \cite{bib11,bib12,bib13,bibn1}. Although substantial research achievements have been made for linear systems, practical engineering problems predominantly involve nonlinear system models, such as robotic manipulator dynamics, power grid transient stability, and aircraft stall dynamics, etc. Adopting linearization techniques such as Taylor series expansion, feedback linearization, or incremental linearization to handle actual nonlinear multi-agent systems may lead to the destruction of the original system's structural characteristics, while also increasing the conservatism of the resulting control protocols. Consequently, nonlinear multi-agent systems have been the focus of substantial scholarly investigation. In \cite{bib14}, the author propose a distributed leaderless consensus algorithms for networked Euler-Lagrange systems. Regarding the consensus problem of multi-agent systems with Lipschitz nonlinear dynamics, \cite{bib15} proposes an adaptive consensus protocol that does not rely on any global information. \cite{bib16} investigates the leaderless consensus problem for first-order nonlinear multi-agent systems under jointly connected communication topologies. In \cite{bib17}, a distributed adaptive consensus control protocol is designed for leaderless strict-feedback nonlinear multi-agent systems, with rigorous stability analysis provided. While extensive research has been conducted on leaderless consensus control for nonlinear multi-agent systems, the development of distributed control protocol for general-form nonlinear systems remains unsatisfactory. Most existing studies focus on designing distributed consensus control protocols for specific classes of nonlinear multi-agent systems. It must be acknowledged that this approach certainly offers advantages—such as facilitating Lyapunov-based stability analysis—yet simultaneously introduces practical implementation difficulties. In reality, many real-world systems cannot be matched with the specialized nonlinear multi-agent systems assumed in these studies, indicating that the applicability of these methods is relatively limited. Moreover, excessive focus on algorithmic adaptivity often leads to overly complex and conservative consensus protocols, posing implementation challenges in practical engineering applications. 
	
	On the other hand, leader-follower consensus control has also garnered considerable attention, particularly in scenarios requiring well-defined mission objectives or hierarchical control architectures. The main focus of leader-follower consensus control is to enable multi-agent systems to rapidly and stably achieve a predefined global consensus state or track specific dynamic targets under the leader's guidance, through distributed consensus  protocols. While extensive research has been conducted on leader-follower consensus control for linear systems \cite{bib18,bib19,bib20,bib21}, challenges such as model mismatch, robustness to uncertainties, and scalability to complex dynamics persist—motivating growing interest in nonlinear system extensions. In \cite{bib17}, a class of strict-feedback nonlinear systems is investigated. To design distributed adaptive leader-follower consensus control protocols, a local compensatory variable is generated based on signals collected from neighboring agents. For a class of high-order nonlinear systems with time-varying references, \cite{bib22} introduces local estimators for reference trajectory bounds and individual agent filters, proposing a  smooth distributed adaptive control protocol based on backstepping to address the leader-following consensus problem. For a class of uncertain nonlinear second-order systems, \cite{bib23} proposes an approach combining consensus protocols with dynamical neural networks (specifically differential neural networks) to approximate follower agents' modeling uncertainties and external disturbances. For leader-follower nonlinear high-order multi-agent systems, a fully distributed consensus tracking control scheme is developed in \cite{bib24}, employing an adaptive-gain fixed-time command filter to ensure filter error convergence. Like leaderless consensus, leader-follower frameworks face limitations: most protocols target specific nonlinear forms, whereas general nonlinear systems better match real-world applications. Thus, developing consensus control protocols for general nonlinear systems is more aligned with practical needs. For instance, in military cooperative operations involving multi-echelon joint strike formations across naval, land, and air domains, the nonlinear characteristics vary significantly among different platform models. Moreover, when addressing emergent scenarios—such as incorporating additional nonlinear agents of heterogeneous types into the original system to follow the leader's signals—more general consensus control protocols demonstrate superior adaptability.
	
	To develop effective distributed consensus control protocols for general consensus problems, optimal control theory serves as a powerful theoretical framework. In our previous work, we investigated the consensus problem for linear systems and derived an analytical form of the optimal consensus controller\cite{bibn3}, \cite{bibn4}. However, the design of the error observer it relies upon is relatively complex, and obtaining an analytical solution for optimal control in nonlinear systems remains challenging. Recently, we successfully applied optimal control theory to optimization problems and proposed an OCP method with superlinear
	convergence rate \cite{bib25}, \cite{bib26}, which prompted us to consider the possibility of applying this method to solve distributed consensus problems. Inspired by the aforementioned work and \cite{bib27}, \cite{bib28}, we study the consensus problem of nonlinear multi-agent systems from the viewpoint of the optimal control problem. To be specific, we design a global cost function to accomplish the consensus task. Our objective is to design control inputs for each nonlinear agent that minimizes this  cost function. Based on the maximum principle, a new optimal consensus algorithm is derived and two enhanced algorithms are developed under the MPC framework. In the design of consensus control for multi-agent systems, leaderless and leader-follower protocols are typically designed separately. Protocols designed for the leaderless case cannot function effectively on graphs for leader-follower scenarios where the directed spanning tree is not strongly connected. Conversely, protocols designed for leader-follower (relying on leader-driven dynamics) will either fail in the absence of a leader, or require additional mechanisms to designate or elect a leader. Unlike \cite{bib14}, \cite{bib23} that address only leaderless or leader-follower configurations separately, our algorithms achieve unified applicability to both scenarios. Besides, it is different from the above-mentioned method of specific classes of nonlinear multi-agent systems that our algorithms are suitable for more general nonlinear systems and the control input sequence can be modified online to enhance the stability of the closed-loop system under the MPC framework. Compared with the existing results of distributed consensus algorithms from the viewpoint of the optimal control theory \cite{bib11,bib29,bib30}, the algorithms proposed in this paper achieve significantly faster convergence rates while maintaining simpler implementation requirements.
	
	The paper is organized as follows. In Section II, the control
	problem is formulated and some necessary preliminaries are provided. In Section III, the consensus control schemes and specific algorithms are given. In Section IV, a convergence analysis of the proposed algorithms is conducted. In Section V, three simulation examples are shown to illustrate the effectiveness of the proposed algorithms and
	finally the paper is concluded in Section VI.
	
	{\bf Notation}: $A^T$ stands for the transposition of
	matrix $A$. $\mathbb{R}^n$ denotes the $n$-dimensional real vector space. $I$ is a unit matrix with appropriate dimension. For a symmetric matrix $M$, $M > 0(\geq 0)$ means
	that $M$ is a positive definite (positive semi-definite) matrix. $\rho(M)$ is the spectral radius of $M$.  $\|\cdot\|$ denotes 2-norm of vectors and $\|\cdot\|_m$ is the induced norm of matrices. 
	\section{PROBLEM FORMULATION}
	\subsection{Multi-agent system model}
	Consider a multi-agent system consisting of $n$ nonlinear agents:
	\begin{equation}
		\begin{split}
			&x_{i}(k+1) = f_i(x_{i}(k),u_{i}(k)),\\ 
			&x_i(0)=a_i,\ i \in\{1,2,\dots,n\} \label{nonagents}
		\end{split}
	\end{equation}
	where $f_i\in\mathbb{R}^p$ is a given deterministic function, $x_i(k) \in \mathbb{R}^p$ is the state variable of $i$-th agent, $u_i(k) \in \mathbb{R}^m$ is the control variable of $i$-th agent, and $x_i(0)$ is the given initial state.
	
	\begin{remark}
		The general nonlinear system formulation can encompass a wide range of practical nonlinear systems, such as \cite{bib17}, \cite{bib23}. Notably, we do not require identical agent dynamics across the multi-agent system, thereby significantly enhancing the generality of our results.
	\end{remark}
	
	\subsection{Graph theory}
	The communication graph is denoted by a directed graph $\mathcal{G=(V,E,A)}$, where $\mathcal{V}=\{1,2, \ldots ,n\}$ is a set of vertices(nodes), $n$ is the number of agents satisfying $n \geq 2$, $\mathcal{E} \subseteq \mathcal{V} \times \mathcal{V}$ is the set of edges, and the
	weighted matrix $\mathcal{A} = (a_{ij})_{n\times n}$ is a non-negative matrix
	for adjacency weights of edges, $a_{ij} \neq 0$ if and only if {$(i, j) \in \mathcal{E}$}. The graph $\mathcal{G}$ is said to be balanced if the sum of the interaction weights from {agent $j$} to agent $i$ are equal, i.e., ${\textstyle\sum_{j=1}^{n}a_{ij}}={\textstyle\sum_{j=1}^{n}a_{ji}}, \forall i\in \mathcal{V}$. Moreover, $\mathcal{N}_i = \{j \in \mathcal{V}|{(i, j)} \in \mathcal{E}\}$ is used to represent the neighbor set { of agent $i$}. For the topology $\mathcal{G}$, a path of length $r$ from node $i_1$ to node $i_{r+1}$ is a sequence of $r + 1$ distinct nodes ${\{i_1\ldots , i_{r+1}\}}$ such that $(i_q, i_{q+1}) \in \mathcal{E}$ for $q = 1, \ldots ,r$. A digraph has a spanning tree,
	if there is an agent called root, such that there is a directed path
	from the root to each other agent in the graph. If there exists a
	directed path between any two distinct nodes in directed graph
	$\mathcal{G}$, the graph is said to be strongly connected.
	\subsection{Control objectives}
	The control objectives of this paper are to design distributed optimal consensus control protocols for all  nonlinear agents (\ref{nonagents}) under the directed graph condition such that:
	\begin{itemize}	
		\item
		For the leaderless case, all agents states reach a consensus through the optimal consensus control protocols, i.e., $\textstyle \lim_{k \to \infty} (x_i(k)-x_j(k))=0$.
		\item
		For the leader-follower case, all agents states reach a consensus by tracking a common desired trajectory (leader's expected trajectory) $x_l(k)$  asymptotically, i.e., $\textstyle \lim_{k \to \infty} (x_i(k)-x_l(k))=0$.
	\end{itemize}
	\section{distributed optimal consensus}
	In this section, the optimal consensus control problem within a finite control period is formulated and distributedly solved using the proposed new algorithms.
	\subsection{Finite-time leaderless optimal consensus}
	To achieve the leaderless optimal consensus control objective, a necessary assumption is imposed.
	
	\begin{assumption}\label{asmpt1}
		The directed graph $\mathcal{G}$ is strongly connected. 	
	\end{assumption}
	
	We consider a finite control horizon $N$ and define the control period from $k=0$ to $k=N$. Here, $u_i$ is the control input sequence for the $i$-th nonlinear agent. Inspired by the optimal consensus problem for linear systems in \cite{bib31}, for a nonlinear multi-agent system, we consider the global cost function as follows:
	\begin{equation}
		\begin{aligned}
			J_1=&\sum\limits_{k = 0}^N \bigg(\sum\limits_{i = 1}^n\big(\sum\limits_{j \in \mathcal N_{i}}(x_i(k)-x_j(k))^{T}Q_{ij}(x_i(k)-x_j(k))\\
			&+ u_i(k)^\mathrm {T} R_{i}u_i(k) \big)\bigg)+\sum\limits_{i = 1}^n\big(\sum\limits_{j \in \mathcal{N}_i }(x_i(N+1)\\
			&-x_j(N+1))^{T}D_{ij}(x_i(N+1)-x_j(N+1))\big), \label{costf}
		\end{aligned}
	\end{equation}
	where $Q_{ij}\geq0$, $F_{ij}\geq0$ and $R_i>0$ are weighted matrices.
	
	For the leaderless case, the optimal consensus control problem is designing a distributed protocol that derives the optimal control input sequence $u_i$ for each nonlinear agent to minimize the cost
	(\ref{costf}) subject to (\ref{nonagents}), which can be formulated as follows.
	
	\textbf{Problem 1}: The finite-time optimal consensus control problem for the leaderless case:
	\begin{equation}
		\begin{array}{l}
			\mathop {\rm minimize}\limits_{U} J_1. \\
			\mathop{s.t.\ (\ref{nonagents})}
		\end{array}  \label{minJ}
	\end{equation}

	Following from \cite{bib32}, applying Pontryagin's maximum principle to the system (\ref{nonagents}) with the cost function (\ref{costf}), the following costate equations and equilibrium conditions are obtained:
	\begin{equation}
		0= R_iu_i(k) + \lambda_i(k+1)\frac{\partial f_i(x_{i}(k),u_{i}(k))}{\partial u_{i}(k)},\ \ \ \ \ \ \ \ \label{ph}
	\end{equation}
	\begin{equation}
		{\lambda_i(k)} = \sum\limits_{j \in \mathcal N_i}Q_{ij}(x_i(k)-x_j(k)) +\lambda_i(k+1)\frac{\partial f_i(x_{i}(k),u_{i}(k))}{\partial x_{i}(k)}, \label{costate}
	\end{equation}
	with the terminal value ${\lambda_i(N+1)} = \sum_{j \in\mathcal Ni}D_{ij}(x_i(N+1)-x_j(N+1))$, and $i \in \{1,2,\dots,n\}$.

	Observing the above equations (\ref{ph})-(\ref{costate}), we find an interesting fact that they can be solved in a distributed manner. From the perspective of the $i$-th agent, it is evident that it only needs the states $x_j(k)$ of its neighbors. 
	
	\begin{remark}
		The derived forms of (\ref{ph})-(\ref{costate}) play a pivotal role in designing subsequent distributed consensus algorithms, as $\lambda_i$ depends exclusively on neighboring states $x_j(k)$. The system's nonlinearity prevents obtaining a closed-form optimal solution for the cost function (hence our later development of an MPC-based enhancement algorithm). Notably, even for linear multi-agent systems, the optimal feedback controller for the global cost function would require complete global error information. Consequently, our primary focus centers on developing efficient distributed methods for solving (\ref{ph})-(\ref{costate}).
	\end{remark}
	
	Equations (\ref{ph})-(\ref{costate}) can be numerically solved using the method of successive approximations (MSA) \cite{bib33},\cite{bib34}, but its convergence rate is relatively slow. The slow convergence rate poses challenges for real-time algorithm deployment, necessitating the development of faster computational methods to address these limitations. In our previous work, we successfully applied optimal control theory to optimization problems and achieved some results \cite{bib25}, \cite{bib26}, \cite{bib35}, which prompted us to consider the possibility of applying this theory to solve distributed optimal consensus control problems, as these problems are also a special class of optimization problems. To improve computational efficiency, an OCP method has been developed in \cite{bib25}, \cite{bib26}, as outlined below.
	\begin{equation}
		\begin{split}
			{u_i^{r+1}} &= u_i^r-d_i^r(u_i^r)\\
			d_i^l(u_i^r)&= (G+\nabla^2 J(u_i^r))^{-1}\big(\nabla J(u_i^r)+Gd_i^{l-1}(u_i^r)\big), \\
			d_i^0(u_i^r)&= (G+\nabla^2 J(u_i^r))^{-1}\nabla J(u_i^r).\label{cs_ocp}
		\end{split}
	\end{equation}
	where $l=1,\dots,r$ means the required number of cycles and $G$ is an adjustable matrix, which can typically be configured as a scalar matrix for practical implementation. Notably, the algorithm encounters significant computational challenges when addressing dynamic problems, particularly problem (\ref{minJ}), stemming from the demanding computations of both $\nabla J(u_i^r)$ and $\nabla^2 J(u_i^r)$. For details on method (\ref{cs_ocp}), please refer to \cite{bib25}.
	
	In \cite{bib28}, we have obtained the computational methods for the $\nabla J(u)$ and $\nabla^2 J(u)$ of a single nonlinear system under a cost function, along with a rigorous theoretical proof. It is noteworthy that  (\ref{ph})-(\ref{costate}) can be regarded as an extension of the coupled forward-backward equations for a single nonlinear system in \cite{bib28}. From the perspective of individual agents, the method in \cite{bib28} can be readily adapted here to efficiently compute $\nabla J(u_i^r)$ and $\nabla^2 J(u_i^r)$. For convenience, we take a first-order nonlinear system as an example here to demonstrate the acceleration algorithm, which is also consistent with the description in [29]. The representation of this method in a nonlinear multi-agent system is as follows.
	
	Accelerated Calculation Method (ACM):\\
	For the $i$-th nonlinear agent, its $\nabla J(u_i^r)$ is
	\begin{equation}
		\nabla J(u_i^r) =\begin{bmatrix} \nabla J\big(u_i^r(0)\big) \\ \vdots \\ \nabla J\big(u_i^r(k)\big)  \\ \vdots \\ \nabla J\big(u_i^r(N)\big)  \end{bmatrix}, \label{F1}
	\end{equation}
	Moreover, the $\nabla J(u_i^r(k))$ and $\lambda_i^r(k)$ are derived as
	\begin{equation}
		\hspace{-2.7em} \nabla J(u_i^r(k)) =  R_iu_i^r(k) + \lambda_i^r(k+1)\frac{\partial f_i(x_{i}^r(k),u_{i}^r(k))}{\partial u_{i}^r(k)}. \label{f1}
	\end{equation}
	\begin{equation}
		{\lambda_i^r(k)} = \sum\limits_{j \in \mathcal N_{i}}Q_{ij}(x_i^r(k)-x_j^r(k)) +\lambda_i^r(k+1)\frac{\partial f_i(x_{i}^r(k),u_{i}^r(k))}{\partial x_{i}^r(k)}. \label{costate1}
	\end{equation}
	where $k \in \{0,1,\dots,N\}$. Thus, we have obtained $\nabla J(u_i^r)$.
	
	For the $i$-th nonlinear agent, its  $\nabla^2 J(u_i^r)$ is
	\begin{equation}
		\nabla^2 J(u_i^r) =\begin{bmatrix} \nabla^2 J_0\big(u_i^r\big) \\ \vdots \\ \nabla^2 J_s\big(u_i^r\big)  \\ \vdots \\ \nabla J_N\big(u_i^r\big)  \end{bmatrix}, \label{F1}
	\end{equation}
	where $s=0,1,\dots,N$, the $\nabla^2 J_s(u_i^r)$ is derived as
	\begin{equation}
		\nabla^2 J_s(u_i^r) =  [\frac{\partial P^s_i(0)}{\partial u_{i}^r(0)}\dots\frac{\partial P^s_i(k)}{\partial u_{i}^r(k)}\dots\frac{\partial P^s_i(N)}{\partial u_{i}^r(N)}], \label{F2JS}
	\end{equation}
	and the expression for $P_i^s(k)$ is given by
	\[
	P_i^s(k) = 
	\begin{cases}
		\begin{aligned}
			& \alpha_i^T(k+1)f_i(x_i^r(k),u_i^r(k)) \\
			&  +\beta_i^T(k)\big(\sum\limits_{j \in \mathcal N_{i}}Q_{ij}(x_i^r(k)-x_j^r(k))\\
			& +\lambda_i^r(k+1)\frac{\partial f_i(x_{i}^r(k),u_{i}^r(k))}{\partial x_{i}^r(k)}\big)
		\end{aligned},  k = s \\
		\begin{aligned}
			& \alpha_i^T(k+1)f_i(x_i^r(k),u_i^r(k)) \\
			& +\beta_i^T(k)\big(\sum\limits_{j \in \mathcal N_{i}}Q_{ij}(x_i^r(k)-x_j^r(k))\\
			&+\lambda_i^r(k+1)\frac{\partial f_i(x_{i}^r(k),u_{i}^r(k))}{\partial x_{i}^r(k)}\big)\\ 
			&+ L_i(x_i^r(k),u_i^r(k),\lambda_i^r(k+1)) 
		\end{aligned},  k \neq s \\ 
	\end{cases}
	\] 
	where $k \in \{1,2,\dots,N \}$, $L_i(x_i^r(s),u_i^r(s),\lambda_i^r(s+1))=\sum_{k=0}^{N}(R_iu_i^r(k) + \lambda_i^r(k+1)\frac{\partial f_i(x_{i}^r(k),u_{i}^r(k))}{\partial u_{i}^r(k)})$, and $L_i(x_i^r(k),u_i^r(k),\lambda_i^r(k+1))=0$ when $k\neq s$. $\alpha_i(k)$ and $\beta_i(k)$ in $P_i^s(k)$ can be calculate as 
	\begin{align}\left\{\begin{aligned}
			&\beta_i(k+1) = \frac{\partial P^s_i(k)}{\partial\lambda_{i}^r(k+1)},\\
			&\alpha_i(k) =\frac{\partial P^s_i(k)}{\partial x_{i}^r(k)} \label{ab},
		\end{aligned}\right.
	\end{align}
	where $\alpha_i(N+1)=0$ and $\beta_i(0)=0$. Thus, we can obtain $\nabla^2 J(u_i^r)$.
	
	\begin{remark}
		The ACM employs the maximum principle in two distinct phases. In the first phase, solving Problem 1 yields the gradient of cost function $J_1$ with respect to $u_i^r$. Subsequently, a novel optimal control problem is formulated to enable Hessian matrix computation through its solution. This dual application of the maximum principle facilitates an iterative computation process for both gradient and Hessian matrix via solution of the corresponding forward-backward differential equations (FBDEs). As mentioned in \cite{bib28}, this method establishes a unified framework that significantly enhances the efficiency of static optimization algorithms when applied to dynamic optimal control problems. Notably, the gradient and Hessian matrix expressions maintain identical forms to the one-dimensional case, regardless of whether the control variables $u_i$ are m-dimensional. Indeed, for optimizing the control variables $u_i$ over $N$ steps with m dimensions, the gradient can be expressed as an $N\cdot m$-dimensional column vector, and the Hessian matrix $\in\mathbb{R}^{Nm\times Nm}$. The relevant derivations and proofs can be found in detail in \cite{bib28}.
	\end{remark}
	
	With the ACM, the $\nabla J(u_i^r)$ and $\nabla^2 J(u_i^r)$ can be efficiently computed. For the $i$-th nonlinear agent, the finite-time leaderless optimal consensus  algorithm is formulated  as follows: 
	\begin{algorithm}[H]
		\renewcommand{\algorithmicrequire}{\textbf{Input:}}
		\renewcommand{\algorithmicensure}{\textbf{Output:}}
		\caption{Leaderless optimal consensus algorithm for the nonlinear multi-agent system}
		\label{alg1}
		\begin{algorithmic}[1]
			\STATE \textbf{Initialization}: Each agent checks whether it can communicate with its neighbors,  sets $x_i(0) \in {\mathbb{R}^p}$ and initialize the control sequence $u_i^0$. Set $t=0$ and $\epsilon>0$.
			
			\FOR {each agent $i=1,...,n$}
			\STATE  Calculate  $\{ x_i^r(0),\dots,x_i^r(N+1)\}$ by $u_i^r$ from (\ref{nonagents}).
			\STATE Send $\{ x_i^r(0),\dots,x_i^r(N+1)\}$ to its neighbors and receive $\{ x_j^r(0),\dots,x_j^r(N+1)\}$ from its neighbor $j$.	
			\ENDFOR	
			\STATE Each agent $i$ calculates $\nabla J(u_i^r)$ and $\nabla^2 J(u_i^r)$ using ACM. 
			\STATE If \( ||\nabla J(u_i^r)||<\epsilon \) for each agent $i$, execute Step 14; otherwise, continue.
			\STATE Each agent $i$ calculates ${d}^0_{i}(u_i^r)= (G+\nabla^2 J(u_i^r))^{-1}\nabla J(u_i^r)$.
			\FOR {$l=1,...,r$}
			\STATE $d_i^{l}(u_i^r)= (G+\nabla^2 J(u_i^r))^{-1}\big(\nabla J(u_i^r)+Gd_i^{l-1}(u_i^r)\big)$
			\ENDFOR
			\STATE Each agent update $u_i^{r+1} = u_i^r-d_i^r(u_i^r)$.
			\STATE Set $r=r+1$ and return to Step 3.
			\STATE Output the control sequence \( u_i^r \).
		\end{algorithmic}
	\end{algorithm}
	In Algorithm 1, communication with neighbors occurs only in Step 4, where each nonlinear agent receives state information from its neighbors and transmits its own state information to them,  while all other steps are computed independently by each nonlinear agent. Step 8 initializes the loop by computing the initial ${d}^0_{i}(u_i^r)$. After a finite number of \( r \) iterations, the finite-time leaderless optimal consensus control sequence \( u_i^r \) is ultimately obtained, as outlined in Step 14.
	
	Although \( N \) is finite in Algorithm 1, an excessively large value of \( N \) may adversely affect the algorithm's performance due to issues such as numerical precision limitations of the computer and inaccuracies in the model. Moreover, from the perspective of real-time computational efficiency, it is not advisable to select an overly large \( N \). Fortunately, MPC can mitigate these issues by solving a series of local optimal control problems online. The prediction horizon of MPC is typically not very large, and it updates control inputs based on the current system state. The numerous advantages of MPC have made it widely applicable in real-life scenarios and more information about MPC can be found in \cite{bib36}, \cite{bib37}. In practical implementation, each agent computes  $\nabla J(u_i^r)$ and $\nabla^2 J(u_i^r)$ using ACM  after communicating with its neighbors. Subsequently, the agent updates its own \( u_i^r \) by applying  (\ref{cs_ocp}). This process is repeated until the optimization converges. For notational convenience, we define:
	\begin{itemize}	
		\item
		$t_k$ as the current time instant.
		\item
		$[0, ..., N_p]$ as the finite prediction horizon.
		\item
		$x_i(\tau|t_k)$, where $\tau \in  [t_k, t_k+N_p]$, represents the predicted state at time $t_k$.
	\end{itemize}	
	For implementation details, please refer to Algorithm 2.
	\begin{algorithm}[H]
		\renewcommand{\algorithmicrequire}{\textbf{Input:}}
		\renewcommand{\algorithmicensure}{\textbf{Output:}}
		\caption{MPC-based leaderless optimal consensus algorithm for nonlinear multi-agent systems}
		\label{alg2}
		\begin{algorithmic}[1]
			\STATE Initialization: Each agent checks whether it can communicate with its neighbors,  sets $x_i(t_0) \in {\mathbb{R}^p}$ and initialize the control sequence $u_i^0=[u_i^0(t_0)^T,\dots,u_i^0(t_0+N_p)^T ]^T$. Set $\epsilon>0$.
			
			\FOR {each agent $i=1,...,n$}
			\WHILE{$t=t_k,k=0,1,2,...$}
			\item Set $r=0$ and  $x_i^0(t_k|t_k)=x_i(t_k)$. 		
			\ENDWHILE
			\STATE  Calculate  $\{ x_i^r(t_k|t_k),\dots,x_i^r(t_k+N_p|t_k)\}$ by $u_i^r$ from (\ref{nonagents}).
			\STATE Send $\{ x_i^r(t_k|t_k),\dots,x_i^r(t_k+N_p|t_k)\}$ to its neighbors and receive $\{ x_j^r(t_k|t_k),\dots,x_j^r(t_k+N_p|t_k)\}$ from its neighbor $j \in \mathcal N_i$. 
			\STATE Calculate $\nabla J(u_i^r)$ and $\nabla^2 J(u_i^r)$ using ACM.
			\STATE Calculate ${d}^0_{i}(u_i^r)= (G+\nabla^2 J(u_i^r))^{-1}\nabla J(u_i^r)$.
			\FOR {$l=1,...,r$}
			\STATE $d_i^{l}(u_i^r)= (G+\nabla^2 J(u_i^r))^{-1}\big(\nabla J(u_i^r)+Gd_i^{l-1}(u_i^r)\big)$
			\ENDFOR	
			\STATE $u_i^{r+1} = u_i^r-d_i^r(u_i^r)$.
			\STATE If \( ||u_i^{r+1} - u_i^r||<\epsilon \) for each agent $i$, execute Step 14; otherwise, set $r=r+1$ and return to Step 5.
			\STATE Extract $u_i(t_k|t_k)$ from $u_i^{r+1}$ and obtain $x_i(t_{k+1})$.
			\STATE	Set $t_k=t_{k+1}$ and return to step 3.
			\ENDFOR	
		\end{algorithmic}
	\end{algorithm}
	
	\subsection{Finite-time leader-follower optimal consensus} \label{555}
	Different from  the leaderless case, in the presence of a leader, the states of the agents not only need to achieve consensus but also must track the state of the leader \( x_l(k) \). To achieve the finite-time leader-follower optimal consensus control objective, a necessary assumption is imposed.
	\begin{assumption}\label{asmpt2}
		The directed graph $\mathcal{G}$ has a spanning tree.
	\end{assumption}
	
	Similar to the leaderless case, we also define a cost function here to achieve finite-time leader-follower optimal consensus. We consider a finite control horizon $N$ and define the control period from $k=0$ to $k=N$. Consider a leader-follower multi-agent system consisting of \( n \) followers and one leader, where the leader moves along a predefined trajectory, and among the \( n \) followers, some agents can communicate directly with the leader. For ease of representation, when  $i  \in \{1, \ldots, m-1\}$ , the \( i \)-th agent does not communicate directly with the leader; when $ i  \in  \{m, \ldots, n\}$, it can communicate directly with the leader. The global cost function is as follows:
	\begin{equation}
		\begin{aligned}	\label{costf2}
			J_2 = {} & \sum_{k = 0}^N \Bigg( \sum_{i = 1}^{m-1} \bigg( \sum_{j \in \mathcal{N}_i} (x_i(k) - x_j(k))^{T} Q_{ij} (x_i(k) - x_j(k)) \\
			& + u_i(k)^{T} R_i u_i(k) \bigg) +\ \sum_{i = m}^{n} \bigg( \sum_{j \in \mathcal{N}_i} (x_i(k) - x_j(k))^{T} Q_{ij}  \\
			& \times(x_i(k)- x_j(k)) + (x_i(k) - x_l(k))^{T} W_{il} (x_i(k) - x_l(k)) \\
			& +\ u_i(k)^{T} R_i u_i(k)\  \bigg)\ \Bigg)\  +\ \ \sum_{i = 1}^{n}\  \bigg(\ \sum_{j \in \mathcal{N}_i}\ (\ x_i(N+1)  \\
			& -\ x_j(N+1))^{T}D_{ij}(\ x_i(N+1)\ -\ x_j(N+1)\ )\ \bigg)  \\
			&+\ \sum_{i = m}^{n} \bigg(\ (x_i(N+1) - x_l(N+1))^{T} E_{il} (x_i(N+1)\\
			& - x_l(N+1)) \bigg),
		\end{aligned}
	\end{equation}
	where $Q_{ij}\geq0$, $W_{il}\geq0$, $D_{ij}\geq0$, $E_{il}\geq0$ and $R_{i}>0$ are weighted matrices. 
	
	For the leader-follower case, the optimal consensus control problem is designing a distributed protocol that derives the optimal control input sequence $u_i$ for each agent to minimize the (\ref{costf2}) subject to (\ref{nonagents}), which can be formulated as follows.
	
	\textbf{Problem 2}: The finite-time optimal consensus control problem for the leader-follower case:
	\begin{equation}
		\begin{array}{l}
			\mathop {\rm minimize}\limits_{U} J_2. \\
			\mathop{s.t.\ (\ref{nonagents})}
		\end{array}  \label{minJ2}
	\end{equation}
	
	Following the same methodology employed in Problem 1, we apply Pontryagin's Maximum Principle to the system described in (\ref{nonagents}) with the cost functional (\ref{costf2}). This yields the following costate equations and equilibrium conditions for optimality:
	\begin{equation}
		\begin{array}{l}
			0= R_iu_i(k) + \lambda_i(k+1)\frac{\partial f_i(x_{i}(k),u_{i}(k))}{\partial u_{i}(k)},\label{ph2}
		\end{array}
	\end{equation}
	\begin{equation} 
		\lambda_i(k) = 
		\begin{cases}
			\begin{aligned}
				&\sum\limits_{j \in \mathcal N_i} Q_{ij}(x_i(k)-x_j(k)) \\
				&+\lambda_i(k+1)\frac{\partial f_i(x_{i}(k),u_{i}(k))}{\partial x_{i}(k)}
			\end{aligned},  i \in \{1,\dots,m-1\} \\
			\begin{aligned}
				&W_{il}(x_i(k)-x_l(k)) \\
				&+\sum\limits_{j \in \mathcal N_i} Q_{ij}(x_i(k)-x_j(k)) \\
				&+\lambda_i(k+1)\frac{\partial f_i(x_{i}(k),u_{i}(k))}{\partial x_{i}(k)}
			\end{aligned},  \ i \in \{m,\dots,n\} \\ 
		\end{cases}
		\label{costate2} 
	\end{equation}
	with the terminal value ${\lambda_i(N+1)} = \sum_{j \in\mathcal N_i}D_{ij}(x_i(N+1)-x_j(N+1))$ when $i \in \{1,2,\dots,m-1\}$ and ${\lambda_i(N+1)} = \sum_{j \in \mathcal N_i}D_{ij}(x_i(N+1)-x_j(N+1))+E_{il}(x_i(k)-x_l(k))$ when $i \in \{m,\dots,n\}$.
	
	It is noteworthy that the solution to (\ref{ph2})-(\ref{costate2}) can still be computed in a distributed manner. Despite the presence of a leader, from the perspective of each agent, the aforementioned equations only involve the state information of itself and its neighbors. The  MSA mentioned in Problem 1 can also be applied to solve (\ref{ph2})-(\ref{costate2}). However, its relatively slow computational speed makes it challenging to deploy online in practical applications. We aim to use  (\ref{cs_ocp}) for the leader-follower case, as this would significantly improve computational efficiency. Reviewing (\ref{cs_ocp}), its applicability hinges on whether \(\nabla J(u_i)\) and \(\nabla^2 J(u_i)\) can be computed using only neighbors information. The computation of \(\nabla J(u_i)\) and \(\nabla^2 J(u_i)\) relies on ACM. We observe that in the leader-follower case, compared to the leaderless scenario, the \(n-m\) agents directly communicating with the leader incorporate an additional term \(W_{il}(x_i^r(k)-x_l^r(k))\) to the original expression \(\sum_{j \in \mathcal N_i}Q_{ij}(x_i^r(k)-x_j^r(k))\) when using ACM. However, this modification does not compromise the distributed nature of the computation, since the leader is a neighbor of these agents. Notably, the leader-follower case preserves the algorithmic structure of ACM without modification, demonstrating the unified nature of our approach across different multi-agent system configurations. 
	
	For the leader-follower case, we directly present the MPC-based algorithm here, whose benefits have been previously established. For a comprehensive implementation, please refer to Algorithm 3. 
	
	\begin{algorithm}[h]
		\renewcommand{\algorithmicrequire}{\textbf{Input:}}
		\renewcommand{\algorithmicensure}{\textbf{Output:}}
		\caption{MPC-based leader-follower optimal consensus algorithm for nonlinear multi-agent systems}
		\label{alg3}
		\begin{algorithmic}[1]
			\STATE Initialization: Each agent checks whether it can communicate with its neighbors,  sets $x_i(t_0) \in {\mathbb{R}^p}$ and initialize the control sequence $u_i^0=[u_i^0(t_0)^T,\dots,u_i^0(t_0+N_p)^T ]^T$. Set $\epsilon>0$.		
			\FOR {each agent $i=1,...,n$}
			\WHILE{$t=t_k,k=0,1,2,...$}
			\item Set $r=0$ and  $x_i^0(t_k|t_k)=x_i(t_k)$. 		
			\ENDWHILE
			\STATE  Calculate  $\{ x_i^r(t_k|t_k),\dots,x_i^r(t_k+N_p|t_k)\}$ by $u_i^r$ from (\ref{nonagents}).
			\STATE Send $\{ x_i^r(t_k|t_k),\dots,x_i^r(t_k+N_p|t_k)\}$ to its neighbors and receive $\{ x_j^r(t_k|t_k),\dots,x_j^r(t_k+N_p|t_k)\}$ from its neighbor $j \in \mathcal N_i$. For agents that can communicate directly with the leader ($i \in{m,...,n}$), they also need to receive $\{ x_l^r(t_k|t_k),\dots,x_l^r(t_k+N_p|t_k)\}$ from the leader.
			\STATE Calculate $\nabla J(u_i^r)$ and $\nabla^2 J(u_i^r)$ using ACM.
			\STATE Calculate ${d}^0_{i}(u_i^r)= (G+\nabla^2 J(u_i^r))^{-1}\nabla J(u_i^r)$.
			\FOR {$l=1,...,r$}
			\STATE $d_i^{l}(u_i^r)= (G+\nabla^2 J(u_i^r))^{-1}\big(\nabla J(u_i^r)+Gd_i^{l-1}(u_i^r)\big)$
			\ENDFOR	
			\STATE $u_i^{r+1} = u_i^r-d_i^r(u_i^r)$.
			\STATE If \( ||u_i^{r+1} - u_i^r||<\epsilon \) for each agent $i$, execute Step 14; otherwise, set $t=t+1$ and return to Step 3.
			\STATE Extract $u_i(t_k|t_k)$ from $u_i^{r+1}$ and obtain $x_i(t_{k+1})$.
			\STATE	Set $t_k=t_{k+1}$ and return to step 3.
			\ENDFOR	
		\end{algorithmic}
	\end{algorithm}
	\subsection{Technical notes of the proposed algorithms} 
	In this subsection, we discuss the compelling characteristics of our proposed algorithms and some simplification techniques in practical use, which are summarized as follows:
	\begin{itemize}	
		\item
		In leaderless multi-agent systems, the configuration of weight matrices $Q_{ij}$ offers two feasible schemes: one may either assign independent weight matrices to each nonlinear agent or adopt a unified inter-neighbor weight matrix. The latter demonstrates greater simplicity in engineering implementation. For leader-follower architectures, setting $W_{il}=Q_{ij}$ reduces the cost function $J_2$ to the form of $J_1$. The explicit separation of leader-connected weights $W_{il}$ from ordinary neighbor-connected weights $Q_{ij}$ in practical design serves to distinguish their respective roles. Simulation experiments reveal that when employing diagonal matrix forms $W_{il}=aI$ and $Q_{ij}=bI$ (where $a\geq 0$, $b\geq 0$), although the algorithm maintains convergence at $a=b$, the system's dynamic performance shows significant improvement when $a>b$.
		\item
		Although this study focuses on nonlinear multi-agent systems, the proposed algorithm remains applicable to linear multi-agent systems as well. In fact, linear systems can be regarded as a special case of nonlinear systems, since—whether the system is nonlinear or linear—applying  Pontryagin's maximum principle to the multi-agent system yields costate equations and equilibrium conditions of the same form . Numerous studies have investigated distributed optimal consensus control for linear systems \cite{bib11},\cite{bib29}, \cite{bib30},\cite{bib38} and the algorithms proposed in this paper can also be validated with such linear models. 
		\item
		The MPC strategy not only enables online deployment of our algorithm but also provides strong disturbance rejection capabilities. Through its receding horizon optimization and feedback correction mechanisms, MPC strategy can handle both internal and external dynamic disturbances in real time. For sudden collision avoidance requirements between agents during operation, MPC quickly responds by replanning trajectories online. When communication delays cause abnormal neighbor state information, its prediction-model-based robustness maintains formation stability. This disturbance rejection capability allows the multi-agent system to achieve mission objectives even under disturbances, fully demonstrating the practical engineering value of the MPC strategy in optimal consensus control.
	\end{itemize}
	
	\section{CONVERGENCE ANALYSIS}           
	In this section, we conduct a convergence analysis of the proposed algorithms. The results show that the superlinear convergence rate of the algorithm demonstrates the superiority of our optimal control-based design. Additionally, the convergence analysis of MPC-based derivative algorithms further verifies the applicability of our approach.
	
	For convenience of future discussion, some symbols are denoted below. In fact, our convergence analysis is conducted from a global perspective, where the preceding distributed algorithm is represented in a centralized form to facilitate the analysis process.
	\begin{equation*}
		\begin{aligned}
			x(k) &= [x_{1}^{T}(k),\ldots,x_{n}^{T}(k)]^{T}, \\
			U &= [u_{1}^{T},\ldots,u_{n}^{T}]^{T}, \\
			Q &= \operatorname{diag}\{Q_{1j},\ldots,Q_{nj}\}, \\
			R &= \operatorname{diag}\{R_{1},\ldots,R_{n}\}, \\
			G &= \operatorname{diag}\{G_{1},\ldots,G_{n}\}, \\
			g(U) &= [\nabla J(u_{1})^{T},\ldots,\nabla J(u_{n})^{T}]^{T}, \\
			H(U) &= \operatorname{diag}\{\nabla^{2}J(u_{1}),\ldots,\nabla^{2}J(u_{n})\}.
		\end{aligned}
	\end{equation*}
	
	\begin{assumption}\label{assm2}
		There exist constants $0<m_1\le m_2<\infty$ such that for any $x \in {\mathbb{R}^m}$, $m_1I\preceq \nabla^2J(u_i)\preceq m_2I$.
	\end{assumption}
	\begin{remark}
		Assumption \ref{assm2} is standard in the convergence analysis, see \cite{bib39}. In practice, for controllable nonlinear systems, the conditions of Assumption \ref{assm2} are not easily satisfied because they involve the convexity of the cost function $J$ with respect to the control sequence. However, it should be noted that if we regard any non-strongly convex $ J(U) $ as a strongly convex function within a neighborhood  $\mathcal{N}$  of some local minimum point, our algorithm will converge to the local minimum when the initial guess  $U^0 \in \mathcal{N}$  and the scaling matrix  $G$ is chosen sufficiently large (i.e., greater than a certain lower bound).
	\end{remark}
	
	\begin{theorem}\label{thmmain}
		The sequence$\{U^r\}$ generated by Algorithm 1 converges to the optimal solution of Problem 1, and there exists a scalar $c_1 > 0$ and an integer $r_0 > 0$ such that for any $r > r_0$, 
		\begin{align}
			\|U^{r+1}-U^*\|\le c_1 \rho((G+H(U^r))^{-1}G)^{r+1} \|U^r-U^*\|. \label{conrate3}
		\end{align}
		That is to say,  Algorithm 1 is superlinearly convergent.
	\end{theorem}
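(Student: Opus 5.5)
The plan is to analyse Algorithm 1 in the centralized form introduced above. Stacking the agent updates, one outer iteration reads $U^{r+1}=U^r-d^r(U^r)$ with
\begin{align*}
d^0&=(G+H(U^r))^{-1}g(U^r),\\
d^l&=(G+H(U^r))^{-1}\big(g(U^r)+Gd^{l-1}\big),\quad l=1,\dots,r.
\end{align*}
The first step is to unroll this inner recursion. Write $M_r=(G+H(U^r))^{-1}G$ and note $(G+H)^{-1}=(I-M_r)H^{-1}$. By induction,
\[
d^r=\sum_{l=0}^{r}M_r^l\,(G+H)^{-1}g=(I-M_r^{r+1})H(U^r)^{-1}g(U^r).
\]
Hence the update is a Newton step perturbed by the factor $M_r^{r+1}$:
\[
U^{r+1}=U^r-H(U^r)^{-1}g(U^r)+M_r^{r+1}H(U^r)^{-1}g(U^r).
\]
I take $G$ positive definite. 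Assumption~\ref{assm2} then gives that $M_r$ is similar to $G^{1/2}(G+H)^{-1}G^{1/2}$, so its eigenvalues lie in $(0,1)$. More precisely, $\rho(M_r)\le \lambda_{\max}(G)/(\lambda_{\max}(G)+m_1)<1$ uniformly in $r$. I also assume, as the paper implicitly does, that $H(U)$ is (block) the true Hessian of $J_1$ produced by ACM and that it is Lipschitz continuous near $U^*$.

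The second step is global convergence. Observe that $d^r=(I-M_r^{r+1})H^{-1}g$ and that $(I-M_r^{r+1})H^{-1}$ is positive definite in the appropriate metric. Its eigenvalues are squeezed between $(1-\bar\rho)/m_2$ and $1/m_1$, with $\bar\rho$ the uniform bound above. So $d^r$ is a descent direction satisfying a uniform angle condition with $g$. Using $m_1$-strong convexity and $m_2$-smoothness, I show $J_1(U^{r+1})\le J_1(U^r)-c\|g(U^r)\|^2$; if needed, I enlarge $G$ so the step is sufficiently damped, as the preceding remark allows. It follows that $g(U^r)\to0$, and by strong convexity $U^r\to U^*$, the unique minimizer of Problem 1.

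The third step is the local rate. Subtract $U^*$ and use $g(U^*)=0$. The Newton part satisfies
\[
\|U^r-H^{-1}g(U^r)-U^*\|\le \tfrac{L}{2m_1}\|U^r-U^*\|^2
\]
by the Lipschitz Hessian. The perturbation part satisfies
\[
\|M_r^{r+1}H^{-1}g(U^r)\|\le \|M_r^{r+1}\|_m\,\tfrac{m_2}{m_1}\|U^r-U^*\|.
\]
The \emph{main obstacle} is replacing $\|M_r^{r+1}\|_m$ by $\rho(M_r)^{r+1}$, because $M_r$ is not symmetric. I would handle this through the similarity $M_r=G^{-1/2}SG^{1/2}$ with $S$ symmetric, which gives $\|M_r^{r+1}\|_m\le \kappa(G^{1/2})\rho(M_r)^{r+1}$. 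When $G$ is scalar, as recommended, this holds with constant $1$.

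For the quadratic term, since $U^r\to U^*$, I choose $r_0$ so that for $r>r_0$ it is dominated: $\tfrac{L}{2m_1}\|U^r-U^*\|\le C\rho(M_r)^{r+1}$. This requires $\|U^r-U^*\|$ to decay at least geometrically at the rate $\bar\rho$, which follows from the linear convergence obtained in step two together with the bound above. Combining the two estimates yields \eqref{conrate3} with $c_1=\kappa(G^{1/2})m_2/m_1+C$. Finally, $\rho(M_r)^{r+1}\to0$ because $\rho(M_r)\le\bar\rho<1$, so $\|U^{r+1}-U^*\|/\|U^r-U^*\|\to0$, which is superlinear convergence.
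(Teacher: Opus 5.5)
The unrolled step $d^r=(I-M_r^{r+1})H(U^r)^{-1}g(U^r)$ is exactly the paper's $\phi(U^r)g(U^r)$. Your local estimate is a more careful version of the paper's argument, which only linearizes $v_t=\phi g$ at $U^*$ with an unquantified ``$\approx$''. Its $\|\bar F^{-1}\|\|\bar F\|$ plays the role of your $\kappa(G^{1/2})$, and it gives no global argument at all.

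\textbf{The global step (your step two) does not go through.} The number of inner passes equals the outer counter $r$, and $\rho(M_r)\le\bar\rho<1$. So $M_r^{r+1}\to 0$ and $\phi_r=(I-M_r^{r+1})H(U^r)^{-1}\to H(U^r)^{-1}$: for every fixed $G$, the late iterations are essentially undamped Newton steps. The descent-lemma bound $J_1(U-\phi g)\le J_1(U)-g^T\phi g+\tfrac{m_2}{2}\|\phi g\|^2$ gives sufficient decrease only if $\lambda_{\max}(\phi_r)<2/m_2$. But $\lambda_{\max}(\phi_r)$ approaches $\lambda_{\max}(H(U^r)^{-1})$, which can be as large as $1/m_1>2/m_2$ whenever $m_2>2m_1$. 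For $G=gI$ and $\lambda_{\min}(H(U^r))=m_1$, the bound holds only while $(g/(g+m_1))^{r+1}>1-2m_1/m_2$, i.e.\ for finitely many $r$. Enlarging $G$ postpones this but never removes it. Undamped Newton is not globally convergent for strongly convex functions with bounded Hessian, so the uniform decrease $J_1(U^{r+1})\le J_1(U^r)-c\|g(U^r)\|^2$ is unproved.

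You must either settle for a local result, which is all the paper's proof and its remark about neighbourhoods $\mathcal N$ support, or give a two-phase argument with $G$ chosen depending on $U^0$. For the local part, use the identity $U^{r+1}-U^*=M_r^{r+1}e_r+(I-M_r^{r+1})n_r$, where $e_r=U^r-U^*$ and $n_r=e_r-H(U^r)^{-1}g(U^r)$ is the Newton error. In the norm $\|v\|_G=\|G^{1/2}v\|$ this gives $\|e_{r+1}\|_G\le(\rho(M_r)^{r+1}+C\|e_r\|_G)\|e_r\|_G$. That is a contraction from any $r$ at which $C\|e_r\|_G<1-\bar\rho^{\,r+1}$.

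\textbf{The domination step is also argued incorrectly.} Geometric decay of $\|e_r\|$ at rate $\bar\rho$ does not give $\tfrac{L}{2m_1}\|e_r\|\le C\rho(M_r)^{r+1}$, because $\rho(M_r)\le\bar\rho$ puts the comparison the wrong way. Indeed $\rho(M_r)\to\rho((G+H(U^*))^{-1}G)$, which is typically strictly below $\bar\rho$, so $\bar\rho^{\,r}/\rho(M_r)^{r+1}\to\infty$.

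Bootstrap from your own recursion instead. Once $e_r\to0$, you have $\|e_{r+1}\|/\|e_r\|\le\tfrac{L}{2m_1}\|e_r\|+\kappa(G^{1/2})\tfrac{m_2}{m_1}\bar\rho^{\,r+1}\to0$, hence $\|e_r\|/q^r\to0$ for every $q>0$. Take $q=\underline\rho:=\lambda_{\min}(G)/(\lambda_{\min}(G)+m_2)$, which is a uniform lower bound for $\rho(M_r)$. This gives $\|e_r\|\le\underline\rho^{\,r+1}\le\rho(M_r)^{r+1}$ for large $r$.

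\textbf{The Hessian assumption carries the superlinear claim.} Your step three, like the paper's formula $v_t'(U^*)=I-((G+H(U^*))^{-1}G)^{r+1}$, needs $H=\partial g/\partial U$. As defined, $H(U)=\operatorname{diag}\{\nabla^2J(u_1),\dots,\nabla^2J(u_n)\}$ omits the inter-agent coupling blocks. The Newton part would then be at best linearly convergent, so the superlinear rate holds only under the assumption you flagged, which the paper also makes.
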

	\begin{proof}
		Let $\phi(U)=[I-((G+H(U))^{-1}G)^{r+1}]H(U)^{-1}$ and 	Algorithm 1 can be written in centralized form as 
		\begin{equation}
			U^{r+1}=U^r-\phi(U^r)g(U^r). \label{ocpsp}
		\end{equation}
		
		Then we will show that $\phi(U)$ is symmetric as follows.	
		\begin{align}
			&\bar \phi(U)\notag\\
			=&((I+G^{-1}H(U))^{-1})^{r+1}H(U)^{-1} \notag\\
			=&(I+G^{-1}H(U))^{-1}(I+G^{-1}H(U))^{-1}\dotsm\notag\\
			&\times (I+G^{-1}H(U))^{-1}H(U)^{-1}\notag\\
			=&[H(U)^{-1}H(U)(I+G^{-1}H(U))^{-1}]\notag\\
			&\times [H(U)^{-1}H(U)(I+G^{-1}H(U))^{-1}]\dotsm \notag\\
			&\times [H(U)^{-1}H(U)(I+G^{-1}H(U))^{-1}]H(U)^{-1} \notag\\
			=&[H(U)^{-1}(I+G^{-1}H(U))^{-1}H(U)]\notag\\
			&\times [H(U)^{-1}(I+G^{-1}H(U))^{-1}H(U)]\dotsm \notag\\
			&\times [H(U)^{-1}(I+G^{-1}H(U))^{-1}H(U)]H(U)^{-1} \notag\\
			=&H(U)^{-1}(I+H(U)G^{-1})^{-1}(I+H(U)G^{-1})^{-1}\dotsm \notag\\
			&\times (I+H(U)G^{-1})^{-1} \notag\\
			=&H(U)^{-1}G(G+H(U))^{-1}G(G+H(U))^{-1}\dotsm \notag\\
			&\times G(G+H(U))^{-1} \notag\\
			=&H(U)^{-1}[G(G+H(U))^{-1}]^{r+1}=\bar \phi(U)^T.
		\end{align} 
		Since $\phi(U)=H(U)^{-1}-\bar \phi(U)$, it shows that $\phi(U)=\phi(U)^T$.
		
		For the \( (G+H(U))^{-1}G \) part in \( \phi(U) \), according to Corollary 4.6.3 in \cite{bibn2}, its eigenvalues are positive. Let $\psi > 0$ and $U$ be  any eigenvalue of $(G+H(U))^{-1}G$ and its corresponding eigenvector, respectively. Then $(1-\psi)U^TGU=\psi U^TH(U)U>0$, which indicates $1-\psi>0$. Accordingly, $0<\psi<1$ and $\rho ((G+H(U)^{-1})G)<1$. Let $H(U)^{-1}=Y^TY$ since $H(U)>0$, then $\phi(U)$ can be rewritten as $Y^{-1}Y(I-((G+H(U)^{-1}G))^{r+1})Y^{T}Y$.  It is easy to know  that $\phi(U)$ and $Y(I-((G+H(U)^{-1}G))^{r+1})Y^{T}$ are similar and $Y(I-((G+H(U)^{-1}G))^{r+1})Y^{T}$ and $I-((G+H(U))^{-1}G)^{r+1}$ are congruent. Recall that each eigenvalue $\psi$ of $(G+H(U))^{-1}G$ admits $0<\psi<1$, so is each one of $((G+H(U))^{-1}G)^{r+1}$. Further, any eigenvalue of $I-((G+H(U))^{-1}G)^{r+1}$ is positive and
		less than one. Consequently, $\phi(U)>0$.
		
		Let $v_t(U)=\phi(U)g(U)$. Following from the direct derivation, it shows
		\begin{align}
			v_t(U^*)=&[I-((G+H(U^*))^{-1}G)^{r+1}]{H(U^*)}^{-1}g(U^*)\nonumber\\
			=&0, \label{hatgkxstar}\\
			v'_t(U^*)=&I-((G+H(U^*))^{-1}G)^{r+1}. \label{dehatgkxstar}
		\end{align}
		From \eqref{ocpsp}, it follows that
		\begin{align}
			&U^{r+1}-U^*\notag\\
			=&U^r-U^*- v_t(U^r)\notag\\
			\approx&U^r-U^*-[ v_t(U^*)+ v'_t(U^*)(U^r-U^*)]\notag\\
			=&U^r-U^*- v'_t(U^*)(U^r-U^*)\notag\\
			=&((G+H(U^*))^{-1}G)^{r+1}(U^r-U^*).
		\end{align} 
		In the third line, the Taylor expansion of \({v}_t(U)\) at \(U^*\) has been utilized. Given \(G > 0\), we have \(G = F^T F\), and the following equalities hold:  
		\[
		\begin{aligned}
			(G +  H(U^*))^{-1} G 
			&= (G +  H(U^*))^{-1} F^T F \\
			&= F^{-1} F (G + H(U^*))^{-1} F^T F \\
		\end{aligned}
		\] 
		The second equality implies:  
		\[
		(G + H(U^*))^{-1} G = \overline{F}^{-1} \, \overline{\Lambda} \, \overline{F},
		\]  
		where \(\overline{\Lambda}\) is a diagonal matrix. From the preceding discussion and $\overline{F}$ is nonsingular, it is immediate to get  $\rho(\overline{\Lambda})=\rho((G+H(U^*))^{-1}G)<1$ and
		\[
		\begin{aligned}
			&\ \ \ \ \|U^{r+1} - U^*\| \\
			&=\left\| \overline F^{-1} \overline \Lambda^{r+1} \overline F (U^r - U^*)  \right\| \\
			&\leq \rho((G+H(U^*))^{-1}G)^{r+1}\|\overline F^{-1}\| \|\overline F\| \|U^r-U^*\|
		\end{aligned}
		\]
		
		Let $c_1 = \| \overline F^{-1}\| \|\overline F\|$. Now it is evident that (\ref{conrate3}) holds. The proof is completed.
	\end{proof}

	For the MPC-based control algorithm, we have previously analyzed that the presence of a leader does not affect our approach, so here we will conduct a convergence analysis of Algorithm 2. Although section III presents an MPC-based algorithm and explains the reasoning behind it, we will rigorously reformulate the consensus problem for nonlinear multi-agent systems within the MPC framework here for the sake of proof completeness.
	
	Similarly to (\ref{costf}), the cost function at predictive time instant $t_k$ is set to be
	
	\begin{align}\label{mpccostf}
		J_{t_k}(U_{\tau|t_k})\!=\!&\sum\limits_{\tau = t_k}^{t_k+N_P-1} (\sum\limits_{i = 1}^n(\sum\limits_{j \in \mathcal N_{i}}(x_i(\tau|t_k)-x_j(\tau|t_k))^{T}Q_{ij}\nonumber\\
		&+\!\sum\limits_{i = 1}^n(\sum\limits_{j \in \mathcal N_{i}}(x_i(t_k+N_p|t_k)-x_j(t_k+N_p|t_k))^{T}\nonumber\\
		&\!\times \!D_{ij}(x_i(t_k+N_p|t_k)-x_j(t_k+N_p|t_k))),
	\end{align}

	At this point, each nonlinear multi-agent needs to employ Algorithm 2 to minimize $ J_{t_k}(U_{\tau|t_k})$ at time step $t_k$. 
	
	\begin{remark}
		Within the MPC framework, Algorithm 2 is designed to incorporate real-time state feedback. This architecture enables dynamic recalibration of the control input sequence during operation, thereby providing adaptive stabilization for the closed-loop control system. The online adjustability characteristic effectively compensates for system uncertainties and disturbances through continuous optimization of control actions.
	\end{remark}
	
	We now proceed to conduct a convergence analysis of the MPC-based Algorithm 2.
	\begin{theorem}\label{thmmain2}
		For the nonlinear multi-agent systems (\ref{nonagents}), the consensus can be achieved asymptotically with the control input generated by Algorithm 2 under Assumption \ref{asmpt1}, i.e,
		\begin{align}
			\lim_{k \to \infty} (x_i(k)-x_j(k))=0. \notag
		\end{align}
	\end{theorem}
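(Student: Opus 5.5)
The plan is the standard MPC argument that uses the optimal value function as a Lyapunov function. Assume, as in Theorem \ref{thmmain}, that at each $t_k$ the inner OCP iteration of Algorithm 2 converges under Assumption \ref{assm2} to the minimizer $U^*_{t_k}$ of $J_{t_k}$ in (\ref{mpccostf}). The closed loop is then the receding-horizon loop applied with the optimal sequence. We take $V(t_k)=J_{t_k}(U^*_{t_k})\ge 0$ as the candidate Lyapunov function. By Assumption \ref{asmpt1}, the stage cost
\[
\ell(x)=\sum_{i=1}^n\sum_{j\in\mathcal N_i}(x_i-x_j)^TQ_{ij}(x_i-x_j)
\]
is zero if and only if all agents agree, provided $Q_{ij}>0$. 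Strong connectivity means every pair $i,j$ is linked through a chain of edges, so a vanishing stage cost forces $x_i=x_j$.

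The first step is the usual shifted-sequence bound. Let $U^*_{t_k}=\{u^*(t_k|t_k),\dots,u^*(t_k+N_p-1|t_k)\}$. At time $t_{k+1}$ we build the feasible candidate $\tilde U$ by dropping the first input and appending a terminal input $\kappa(x(t_k+N_p|t_k))$. Comparing the two costs term by term gives
\[
V(t_{k+1})\le J_{t_{k+1}}(\tilde U)\le V(t_k)-\ell(x(t_k))-u^*(t_k|t_k)^TRu^*(t_k|t_k)+\Delta_k,
\]
where $\Delta_k$ collects the terminal mismatch:
\[
\Delta_k=\ell(x(t_k+N_p|t_k))+u_\kappa^TRu_\kappa+V_f(x^+)-V_f(x(t_k+N_p|t_k)).
\]
Here $V_f$ is the $D_{ij}$-weighted terminal cost and $u_\kappa=\kappa(x(t_k+N_p|t_k))$.

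The main obstacle is making $\Delta_k\le 0$, since the paper states no terminal ingredients. I would impose a standard terminal assumption: the $D_{ij}$ are chosen so that $V_f$ is a control Lyapunov function for the disagreement dynamics on the consensus manifold. Concretely, this means there is a local distributed feedback $\kappa$ such that $V_f(x^+)-V_f(x)\le -\ell(x)-u_\kappa^TRu_\kappa$. I would try to justify this by linearizing the agents near a consensus trajectory. If that fails, the fallback is a sufficiently long horizon $N_p$ together with an exponential cost-controllability bound, which makes $\Delta_k$ a small multiple of $\ell(x(t_k))$ and so absorbable.

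With $\Delta_k\le0$, $V$ is nonincreasing and bounded below. Telescoping gives
\[
\sum_k \ell(x(t_k))\le V(t_0)<\infty,
\]
so $\ell(x(t_k))\to0$. Using $Q_{ij}>0$ and strong connectivity as in the first paragraph, this yields $x_i(k)-x_j(k)\to0$ for all $i,j$.

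The main risk is therefore the terminal-cost step. Without it, Algorithm 2 is not guaranteed to be recursively stabilizing for general nonlinear $f_i$, so the proof should make that hypothesis explicit.
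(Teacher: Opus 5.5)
Your skeleton, namely the shifted candidate (\ref{fs22})--(\ref{fs23}) with the optimal value $V(t_k)=J_{t_k}(U^*_{\tau|t_k})$ as a Lyapunov function, is exactly the paper's. Your argument is correct for a suitably corrected statement. The real difference is the tail term. The paper sets $D_{ij}=0$, appends a zero input, and asserts (\ref{inqJ2}). But the shifted cost at $t_{k+1}$ still contains the stage cost at $\tau=t_k+N_p$, and nothing in $J_{t_k}$ compensates it. What the paper silently drops is therefore precisely your $\Delta_k=\ell(x^*(t_k+N_p|t_k))\ge 0$.

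This is a genuine failure, not a technicality. Take two scalar single integrators $x_i(k+1)=x_i(k)+u_i(k)$, linked both ways, with $D_{ij}=0$ and $N_p=1$ in (\ref{mpccostf}). Only the current state is penalized, so $u^*=0$ and the agents never move. Then $V(t_{k+1})=V(t_k)=\ell(x(t_k))>0$, which contradicts (\ref{inqJ2}) even though Assumptions \ref{asmpt1} and \ref{assm2} hold; here $\Delta_k=\ell(x(t_k))$ cancels the decrease exactly. So the theorem as stated needs extra hypotheses, and you are right to make them explicit. That includes $Q_{ij}>0$: with the paper's $Q_{ij}\ge 0$, $\ell=0$ does not force agreement.

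Your ending is also cleaner than the paper's. The paper passes to a LaSalle-type limit set via \cite{bib40}, which needs boundedness and an invariance property of the optimizer sequence that it only asserts. Your telescoped bound $\sum_k\ell(x(t_k))\le V(t_0)$ gives $\ell(x(t_k))\to 0$ directly, and the path argument finishes.

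Three points in your patch need tightening.

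\textbf{Terminal region.} A \emph{local} terminal feedback $\kappa$ only helps if $x(t_k+N_p|t_k)$ lies in its region of validity. That requires a terminal constraint and recursive feasibility. Algorithm 2 is an unconstrained Newton-type iteration, so you must either assume the terminal decrease globally or modify the algorithm.

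\textbf{Agreement set.} Evaluate your terminal inequality where the agents agree: there $\ell=0$ and $V_f=0$, so it forces $u_\kappa=0$ and $V_f(x^+)=0$. In other words, zero input must keep agreeing agents in agreement ($f_i(z,0)=f_j(z,0)$ when $D_{ij}>0$). The cost penalizes $u_i$ itself, not its deviation from an agreement-preserving input, so this excludes the heterogeneous agents the paper advertises. No linearization about a consensus trajectory can supply a condition that fails on the agreement set itself. Your long-horizon fallback hits the same wall: bounding the optimal cost by a multiple of $\ell$ forces it to vanish where $\ell$ does. State this restriction alongside the terminal assumption.

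\textbf{Optimality.} The step $V(t_{k+1})\le J_{t_{k+1}}(\tilde U)$ needs the global minimizer. Algorithm 2, however, stops at tolerance $\epsilon$, and Theorem \ref{thmmain} rests on a local Taylor argument. It is more robust to warm-start the inner loop at $\tilde U$ and redefine $V(t_k)$ as the cost of the returned sequence. You then only need the applied sequence to cost no more than $\tilde U$, which is all your decrease inequality uses.
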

	\begin{proof}
		Assume that $u_i^*(\tau|t_k)$ is the optimal solutions at the predictive time $t_k$ with $\tau \in [t_k,t_k+N_p]$ for each agent $i$. We can obtain the feasible solution at time \( t_{k+1} \) as follows:  
		
		\begin{align}
			{u_i}(\tau|t_{k+1}) =
			\begin{cases} 
				{u_i}^*(\tau|t_k), &  \tau \in [t_{k+1}, t_k+N_p-1], \\
				{u}_{i}(\tau|t_{k+1}), &  \tau =t_{k+1}+N_p-1 .
			\end{cases} \label{fs22}
		\end{align} 
		
		The feasible system state can be obtained from the above controller and is denoted as:  
		
		\begin{align}
			{x_i}(\tau|t_{k+1}) = \begin{cases} 
				{x}^*(\tau|t_k)  & \ \tau \in [t_{k+1}, t_{k}+N_p] \\
				{x}_{i}(\tau|t_{k+1}) &  \tau = t_{k+1}+N_p.
			\end{cases} \label{fs23}
		\end{align} 
		For the optimal control input $u_i^*(\tau|t_{k+1})$ at time $t_{k+1}$, the following inequality holds:	
		\begin{align}
			&J_{t_{k+1}}(U^*_{\tau|t_{k+1}}) - 	J_{t_{k}}(U^*_{\tau|t_{k}}) \leq J_{t_{k+1}}(U_{\tau|t_{k+1}})-J_{t_{k}}(U^*_{\tau|t_{k}}) \notag \\
			=&\sum\limits_{i = 1}^{n} \bigg \{ \sum\limits_{\tau = t_{k+1}}^{t_{k+1}+N_p-1} \big( (x_i(\tau|t_{k+1})-x_j(\tau|t_{k+1}))^TQ_{ij} \notag \\
			&\times(x_i(\tau|t_{k+1})-x_j(\tau|t_{k+1}))+ u_i(\tau|t_{k+1})^TR_{i} u_i(\tau|t_{k+1}) \big) \notag \\
			&+(x_i(t_{k+1}+N_p|t_{k+1})-x_j(t_{k+1}+N_p|t_{k+1}))^TD_{ij} \notag \\
			&\times(x_i(t_{k+1}+N_p|t_{k+1})-x_j(t_{k+1}+N_p|t_{k+1})) \notag \\
			&-\sum\limits_{\tau = t_{k}}^{t_{k}+N_p-1}\big( (x_i^*(\tau|t_{k})-x_j^*(\tau|t_{k}))^TQ_{ij}(x_i^*(\tau|t_{k}) \notag  \\
			&-x_j^*(\tau|t_{k}))+ \notag u_i^*(\tau|t_{k})^T R_{i} u_i^*(\tau|t_{k}) \big)- (x_i^*(t_{k}+N_p|t_{k}) \notag \\ 
			&-x_j^*(t_{k}+N_p|t_{k}))^TD_{ij}(x_i^*(t_{k}+N_p|t_{k}) \notag \\
			&-x_j^*(t_{k}+N_p|t_{k})) \bigg \} \label{inqJ1}.
		\end{align} 
		In ACM, similar to \cite{bib28}, we set \( D_{ij} = 0 \) for computational convenience, in which case \( u_i(t_{k+1} + N_p - 1|t_{k+1}) \) can also be zero.  Substituting the feasible solution (\ref{fs22})-(\ref{fs23}) into (\ref{inqJ1}) and obviously we have
		\begin{align}
			&J_{t_{k+1}}(U^*_{\tau|t_{k+1}}) - 	J_{t_{k}}(U^*_{\tau|t_{k}}) \leq \notag\\
			&- \sum\limits_{i = 1}^{n} \bigg \{ (x_i^*(t_k|t_{k})-x_j^*(t_k|t_{k}))^TQ_{ij}(x_i^*(t_k|t_{k})-x_j^*(t_k|t_{k}))\notag \\
			&- u_i^*(t_k|t_{k})^TR_{i}u_i^*(t_k|t_{k})\bigg \} \label{inqJ2},
		\end{align} 
		which implies that $\{ J_{t_k}(U^*_{\tau|t_k})\}$ is nonincreasing. Because the quadratic cost function is non-negative, we can conclude that $\{ J_{t_k}(U^*_{\tau|t_k})\}$ converges as \( t_k \) approaches infinity. In other words, there exists a constant $\zeta \geq$ such that
		\begin{align}
			\textstyle \lim_{t_k \to \infty} J_{t_k}(U^*_{\tau|t_k})=\zeta. \notag
		\end{align}
		It should be noted that the consensus error $x^*_i(\tau|t_k)-x^*_j(\tau|t_k)$ and the control input $u^*_i(\tau|t_k)$ are bounded for all $t_k$. As established in \cite{bib40}, there exists a non-empty, compact, and positively invariant set \( \Omega \) that contains all positive limit points of the vector sequence $[(x_i^*(\tau|t_k)-x_j^*(\tau|t_k))^T,u_i^*(\tau|t_k)^{T}]^T$ as $t_k \to \infty$. Consequently, for any element \( \omega_i \in \Omega \), there exists a subsequence such that:
		\[
		[(x_i^*(\tau|t_{k}) - x_j^*(\tau|t_{k}))^T, u_i^*(\tau|t_{k}^T)]^T \to \omega_i ,
		\]
		with \( J(\omega_i) = \zeta \). The positive invariance of \( \omega_i \) implies that
		
		\begin{align}
			\Omega\subset\left\{x_{i}^{\star}(\tau|t_{k}),u_{i}^{\star}(\tau|t_{k})\mid J_{t_{k+1}}(U^*_{\tau|t_{k+1}})-J_{t_k}(U^*_{\tau|t_k})=0\right\}.
		\end{align}
		Hence, (\ref{inqJ2}) implies that
		\begin{align}
			\Omega\subset \{&x_{i}^{\star}(\tau|t_{k}),u_{i}^{\star}(\tau|t_{k})\mid \| x_i^*(t_k|t_k)-x_j^*(t_k|t_k)\|^2=0,\notag \\  
			&\|u_i^*(t_k|t_k)\|^2=0, \quad i\in \{1,2,...,n\}, j \in \mathcal{N}_i \}. 
		\end{align}
		Then we can show that $\textstyle \lim_{t_k \to \infty}\| x_i^*(\tau|t_k)-x_j^*(\tau|t_k)\|= 0.$ From Theorem 2,
		it follows that consensus can be achieved asymptotically by using Algorithm 2. The proof is completed.
	\end{proof}
	
	\begin{remark}
		As previously mentioned, the optimal consensus control for nonlinear multi-agent systems within the optimal control theory framework can uniformly address both leader-following and leaderless scenarios. Consequently, the convergence analysis process for Algorithm 3 is identical to that of Algorithm 2, with the assumption conditions further relaxed to Assumption \ref{asmpt2}. In other words, under Assumption \ref{asmpt2}, the nonlinear multi-agent system can achieve leader-follower optimal consensus through Algorithm 3.
	\end{remark}

	\section{SIMULATION EXAMPLES}
	To further demonstrate the superiority of the algorithms, this section applies the proposed distributed optimal consensus control algorithms to solve several practical problems. Specifically, for the leaderless case, we test our Algorithm 2 on the AGVs rendezvous task; for the leader-follower case, we adopt the multi-agent system from \cite{bib30} and compare the results of our Algorithm 3 with those of the algorithm proposed in \cite{bib30}; additionally, we demonstrate the performance of our algorithm in formation control problems. All our experiments are executed on the same computer with a 3.2 GHz Intel Core $i$-5 processor with Matlab R2020a.
	\subsection{Consensus  problem for leaderless case} 
	Consider the following multi-agent system consisting of four AGVs, the  kinematic model of each AGV is
	\begin{equation}
		\begin{cases}
			\dot x_i(t) = v_i(t)\cdot{\rm cos}\theta_i(t), \\
			\dot y_i(t) = v_i(t)\cdot{\rm sin}\theta_i(t),\\
			\dot \theta_i(t) = \omega_i(t),
		\end{cases}                         \label{ODE_AGV}
	\end{equation}
	where $t\in[0,t_f]$. $x_i$ and $y_i$ are the position for each agent in the plane, while $\theta_i$ represents the AGV's orientation. $v$ and $\omega$ are the control inputs.  The discretized form of (\ref{ODE_AGV}) can be expressed as:
	\begin{equation}
		\begin{cases}
			x_i(k+1) = x_i(k) + \Delta\cdot v_i(k){\rm cos}\theta_i(k) , \\
			y_{i}(k+1) = y_i(k) + \Delta\cdot v_i(k){\rm sin}\theta_i(k),\\
			\theta_{i}(k+1) = \theta_i(k) + \Delta\cdot \omega_i(k),
		\end{cases}                         \label{AGV}
	\end{equation}
	where $k=0,1,...,N-1$ represents the sampling times, and $\Delta$ is the time step.
	
	The aforementioned AGV model has been widely applied in various domains, including warehouse logistics, industrial automation, and intelligent transportation systems. Notably, numerous studies such as \cite{bib41, bib42, bib43}, have adopted this model for algorithm validation and performance analysis. Here we employ this model to verify the effectiveness of our leaderless consensus control algorithm. The communication topology for four AGVs is shown in Fig. \ref{ctopology}.
	\begin{figure}[htbp]
		\captionsetup{font={footnotesize}}
		\centering
		\includegraphics[width=0.8\linewidth]{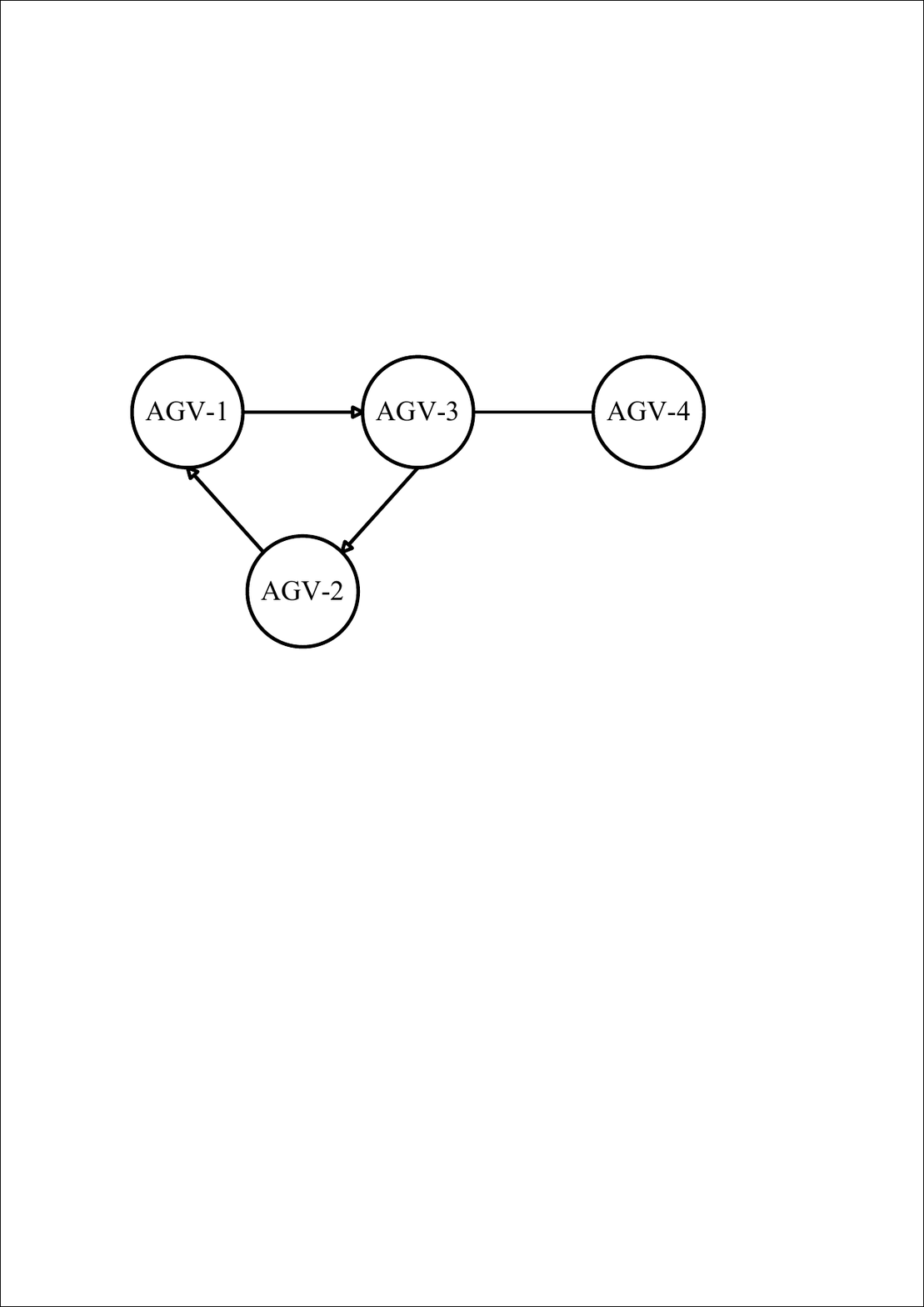}
		\caption{ Communication topology of AGVs}
		\label{ctopology}
	\end{figure}
	
	In this example, We employ Algorithm 2 to accomplish the rendezvous task for four AGVs. Specifically, four AGVs are positioned at different locations in the room, where each AGV can only communicate with its neighboring AGVs. The objective is for each AGV to rapidly converge to the same position using its own optimal consensus control protocol. The time step $\Delta=0.05$ and the prediction horizon $N_p=8$. The initial pose for each AGV is set as $\{x_1(0)=0.10,y_1(0)=0.30,\theta_1(0)=0.78\}$, $\{x_2(0)=6.20,y_2(0)=0.10,\theta_2(0)=2.36\}$, $\{x_3(0)=6.00,y_3(0)=6.00,\theta_3(0)=3.93\}$, $\{x_4(0)=-0.1,y_4(0)=6.4,\theta_4(0)=-0.78\}$ and the weight matrix $Q_{ij}={\text{diag}}[10,10,10], R_i={\text{diag}}[0.1,0.1],\ i\in\{1,2,3,4\}$.
	
	Fig. \ref{ctraj} shows the motion trajectories of the AGVs, illustrating smooth convergence to the optimal consensus point. Here we define the consensus error in our study as the Euclidean distance between neighboring agents. The consensus error between each AGV and its neighbors is shown in Fig. \ref{cerror}, which demonstrates rapid and stable convergence to zero. The simulation results demonstrate that Algorithm 2 can effectively solve the leaderless consensus problem in nonlinear multi-agent systems.
	\begin{figure}[htbp]
		\captionsetup{font={footnotesize}}
		\centering
		\includegraphics[width=0.85\linewidth]{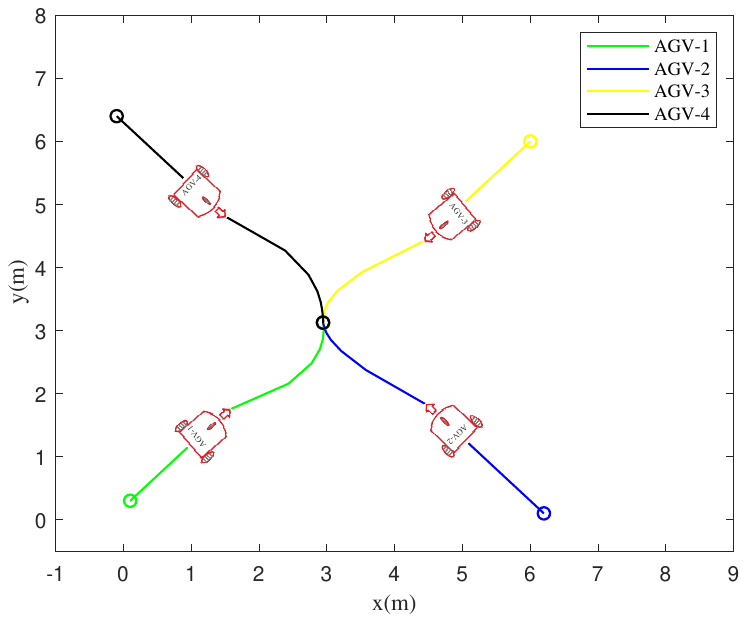}
		\caption{ Actual trajectories of AGVs}
		\label{ctraj}
	\end{figure}

	\begin{figure}[htbp]
		\captionsetup{font={footnotesize}}
		\centering
		\includegraphics[width=0.85\linewidth]{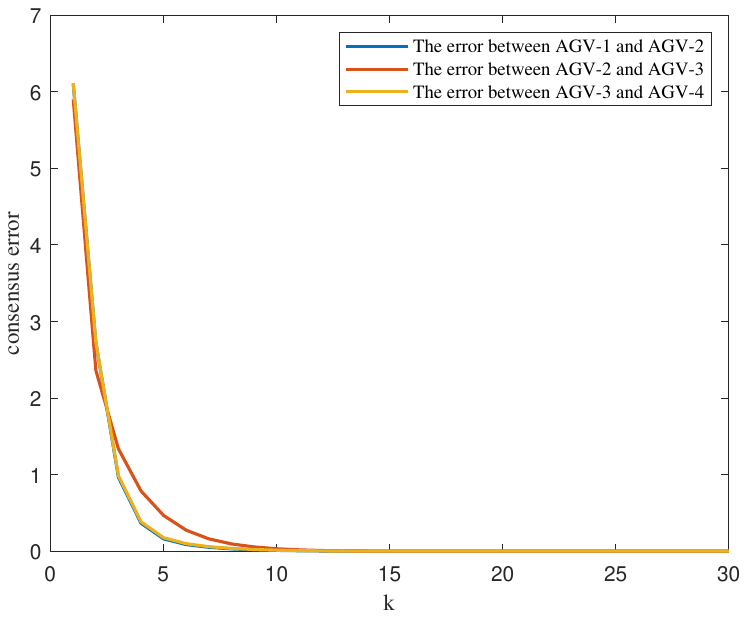}
		\caption{ Consensus errors of AGVs}
		\label{cerror}
	\end{figure}
	
	\subsection{Consensus  problem for leader–follower case} 
	Consider the following multi-agent system consisting of four followers and one leader. The system model for each follower is given by 
	\begin{equation}
		x_i(k+1) = Ax_i(k) + B(u_i(k)+f(x_i(k))), \label{vsagentf}
	\end{equation}
	with $$ A=\left[\begin{array}{cc}
		0.898 & 0.056\\
		0.968 & -0.084
	\end{array}\right],\quad B=\left[\begin{array}{l}
		0.87\\
		-1.8
	\end{array}\right] $$
	and $$ f(x_i(k))=\left[\begin{array}{cc}
		0.01sin(x_{i1}(k)) \\
		0.01sin(x_{i2}(k))
	\end{array}\right].$$
	
	The dynamic of leader is given by
	\begin{equation}
		x_l(k+1) = A_lx_l(k) + B_l(f_l(x_l(k))+h_l(k)), \label{vsagentl}
	\end{equation}
	with $$ A_l=\left[\begin{array}{cc}
		0.898 & 0.056\\
		0.968 & -0.084
	\end{array}\right],\quad B_l=\left[\begin{array}{l}
		0.87\\
		-1.8
	\end{array}\right] $$
	and $$ f_l(x_i(k))=\left[\begin{array}{cc}
		0.01sin(x_{i1}(k)) \\
		0.01sin(x_{i2}(k))
	\end{array}\right], h_l(k)=\begin{array}{l}0.1sin(0.05k).
	\end{array} $$
	
	The communication topology for four followers and one leader is shown in Fig. \ref{ctopology2}.
	\begin{figure}[htbp]
		\captionsetup{font={footnotesize}}
		\centering
		\includegraphics[width=0.7\linewidth]{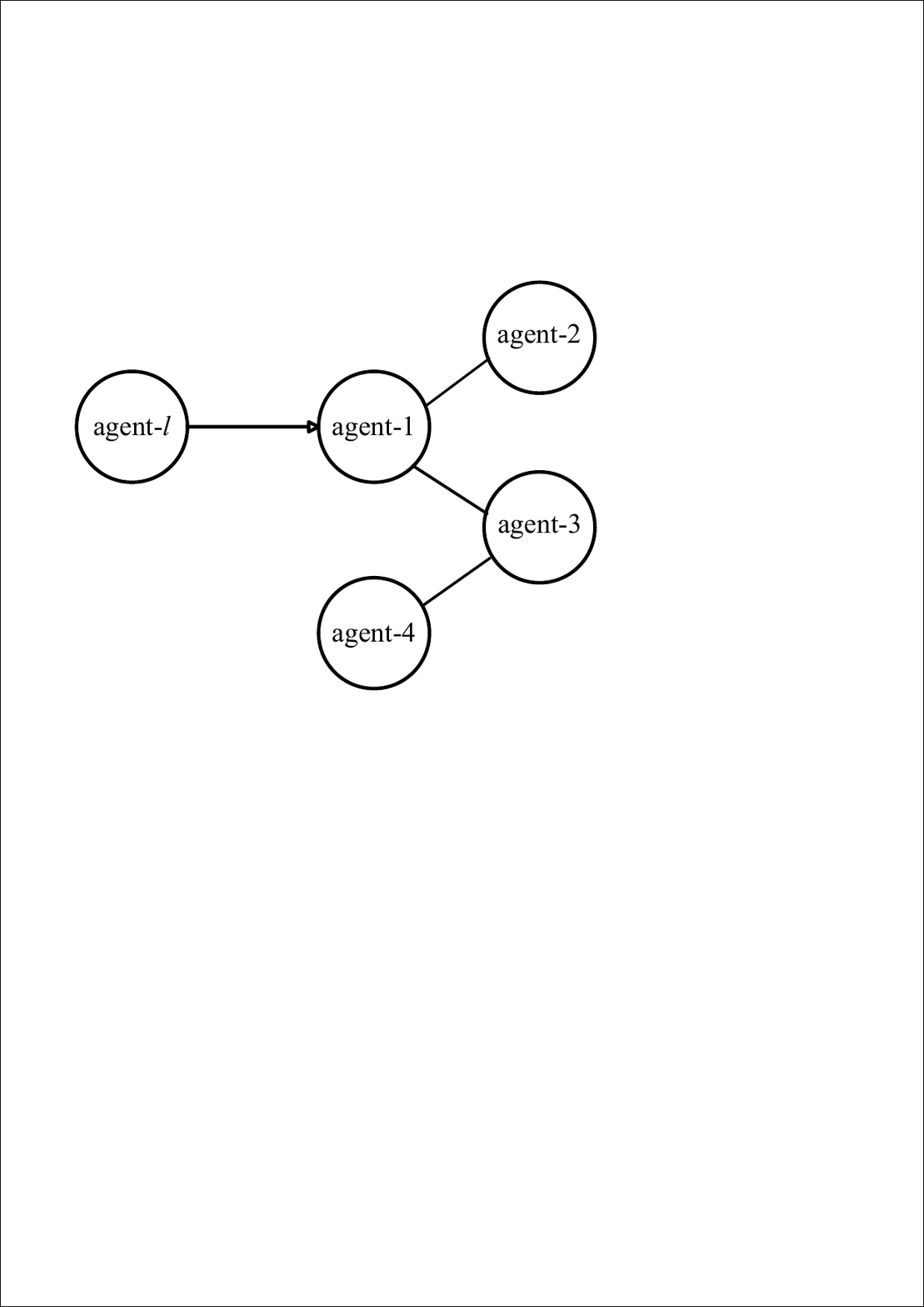}
		\caption{ Communication topology of agents}
		\label{ctopology2}
	\end{figure}
	
	In this example, we employ Algorithm 3 to address the leader-follower consensus control problem. For notational convenience, we designate the four follower agent as agent-1 through agent-4, and denote the leader as agent-$l$. For comparative purposes, the algorithm proposed in \cite{bib30} is also implemented for this problem. The initial states of the multi-agent system are chosen as follows:
	$$ 
	x_{l}(0)=\begin{bmatrix}
		-8.14 \\ 
		30.33
	\end{bmatrix}, 
	x_{1}(0)=\begin{bmatrix}
		-12.23 \\ 
		8.93
	\end{bmatrix}, 
	x_{2}(0)=\begin{bmatrix}
		-14.31 \\ 
		2.08
	\end{bmatrix} 
	$$
	
	$$
	x_{3}(0)=\begin{bmatrix}
		-4.11 \\ 
		-1.31
	\end{bmatrix}, 
	x_{4}(0)=\begin{bmatrix}
		-4.57 \\ 
		14.28
	\end{bmatrix}.
	$$
	
	Regarding Algorithm 3, we configure the parameters with prediction horizon $N_p=8, Q_{ij}=30, W_{1l}=80$, and $R_i=1$. As for the algorithm in \cite{bib30}, we directly adopt the parameters specified in the original paper, namely $M_1=[0.73\ 0.84]$, $M_2=[0.68\ 0.78]$, $M_3=[0.8\ 0.98]$, $M_4=[0.65\ 0.72]$, $q_1=10$, $q_2=8$, $q_3=10$, $q_4=9$, $\alpha_1=0.3$, $\alpha_2=0.2$, $\alpha_3=0.23$, $\alpha_4=0.14$, $\beta_1=3$, $\beta_2=6$, $\beta_3=10$, $\beta_4=8$, $K_1=[0.351\ 0.056]$, $K_2=[0.344\ 0.054]$, $K_3=[0.345\ 0.052]$,  and $K_4=[0.348\ 0.055]$. 
	
	Fig. \ref{alg3x} and Fig. \ref{alg3y} show the tracking  trajectories of the agents using Algorithm 3.  
	Fig. \ref{scx} and Fig. \ref{scy} show the tracking  trajectories of the agents using algorithm in \cite{bib30}. For the first state variable, our Algorithm 3 achieves synchronization with the leader's trajectory within 15 iterations, whereas the algorithm from \cite{bib30} requires at least 126 iterations (under the condition that the maximum convergence error between the first state variable of each agent and the leader does not exceed 0.016). Regarding the second state variable, Algorithm 3 attains trajectory tracking in just 14 iterations, compared to the minimum 125 iterations needed by the algorithm in \cite{bib30} (under the condition that the maximum convergence error between the second state variable of each agent and the leader does not exceed 0.04). By comparing the results shown in the aforementioned figures, it can be observed that our algorithm enables each agent to track the leader's trajectory more rapidly. The consensus error between each agent and the leader are presented in Fig. \ref{alg3error} and Fig. \ref{scerror} (here we define the consensus error as the Euclidean distance between each agent and the leader.). The results demonstrate that Algorithm 3 achieves both faster convergence speed and better stability compared to the algorithm in \cite{bib30}. Although the consensus error of the algorithm in \cite{bib30} also exhibits a decaying trend, its convergence rate is slower with noticeable oscillations occurring within the first 50 iterations. The simulation results demonstrate that Algorithm 3 can effectively solve the leader-follower consensus problem in nonlinear multi-agent systems.
	\begin{figure}[htbp]
		\captionsetup{font={footnotesize}}
		\centering
		\includegraphics[width=0.87\linewidth]{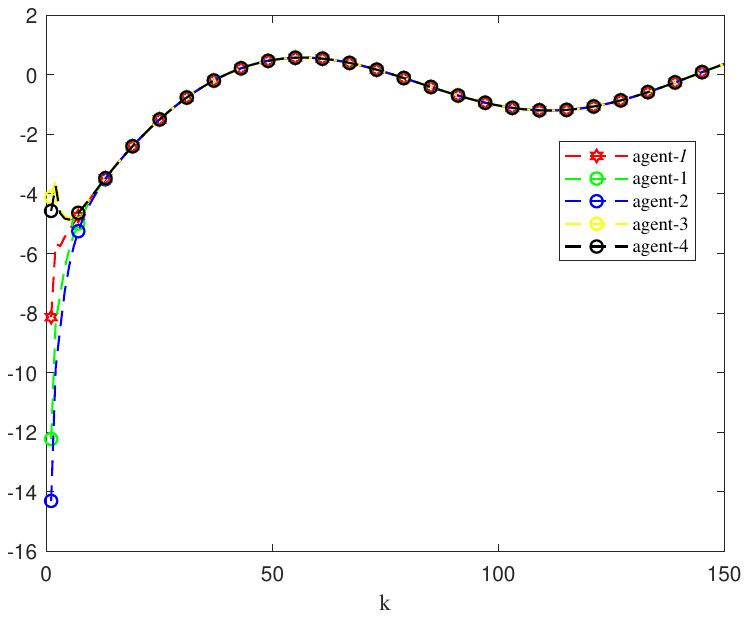}
		\caption{ The first state tracking trajectory using Algorithm 3}
		\label{alg3x}
	\end{figure}
	
	\begin{figure}[htbp]
		\captionsetup{font={footnotesize}}
		\centering
		\includegraphics[width=0.87\linewidth]{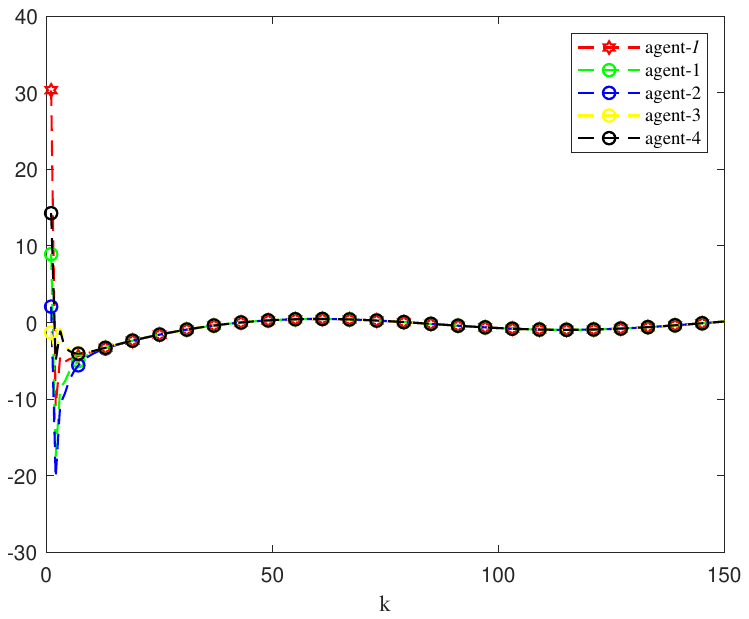}
		\caption{ The second state tracking trajectory using Algorithm 3}
		\label{alg3y}
	\end{figure}
	
	\begin{figure}[htbp]
		\captionsetup{font={footnotesize}}
		\vspace{-0.665cm}
		\centering
		\includegraphics[width=0.87\linewidth]{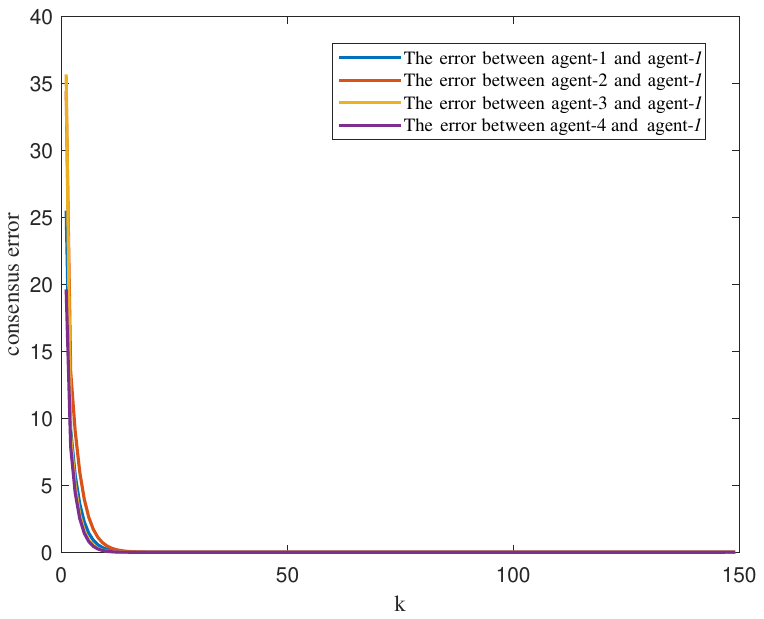}
		\caption{ Consensus errors using Algorithm 3}
		\label{alg3error}
	\end{figure}
	
	\begin{figure}[htbp]
		\captionsetup{font={footnotesize}}
		\vspace{-0.5cm}
		\centering
		\includegraphics[width=0.87\linewidth]{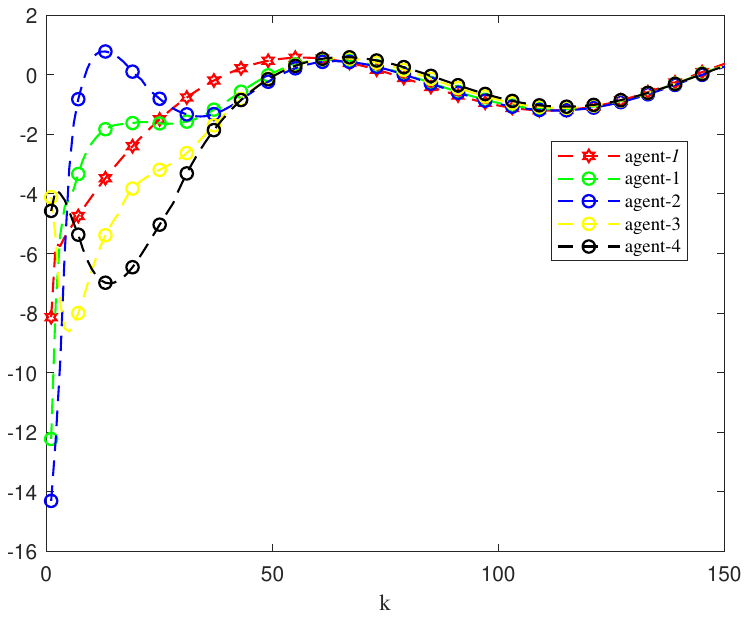}
		\caption{ The first state tracking trajectory using algorithm in \cite{bib30}}
		\label{scx}
	\end{figure}
	
	\begin{figure}[htbp]
		\captionsetup{font={footnotesize}}
		\centering
		\includegraphics[width=0.87\linewidth]{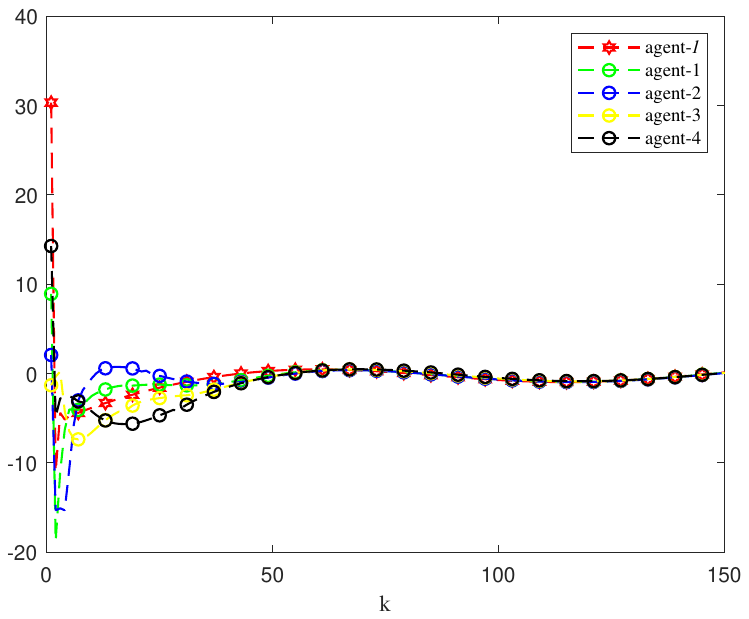}
		\caption{ The second state tracking trajectory using algorithm in \cite{bib30}}
		\label{scy}
	\end{figure}
	
	\begin{figure}[htbp]
		\captionsetup{font={footnotesize}}
		\vspace{0.1cm}
		\centering
		\includegraphics[width=0.87\linewidth]{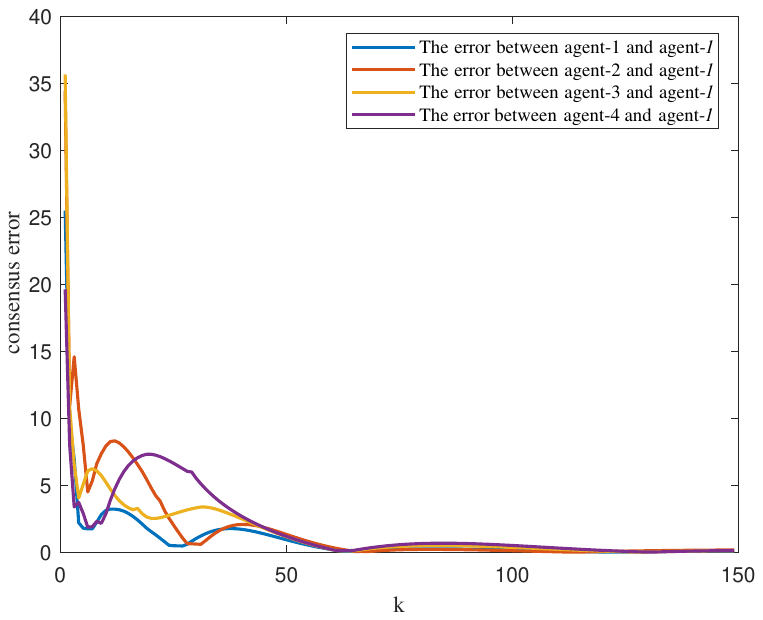}
		\caption{ Consensus errors using algorithm in \cite{bib30}}
		\label{scerror}
	\end{figure}
	
	\subsection{Formation tracking problem}
	In the preceding two examples, we have presented results for both leaderless and leader-follower scenarios, demonstrating the advantages of our algorithm. To further validate the algorithm's general applicability, we consider the following AGV formation control problem: all follower AGVs will maintain the specified formation while moving collectively with the leader AGV. To achieve this objective, it suffices to incorporate a distance-based formation function into Algorithm 3, where the neighbor position information acquired by each AGV (Step 5) inherently includes formation constraints. The communication topology for four follower AGVs and one leader AGV  is shown in Fig. \ref{ctopology3}. For notational convenience, we designate the four follower AGVs as AGV-1 through AGV-4, and denote the leader AGV as AGV-$l$.
	\begin{figure}[htbp]
		\captionsetup{font={footnotesize}}
		\centering
		\includegraphics[width=0.8\linewidth]{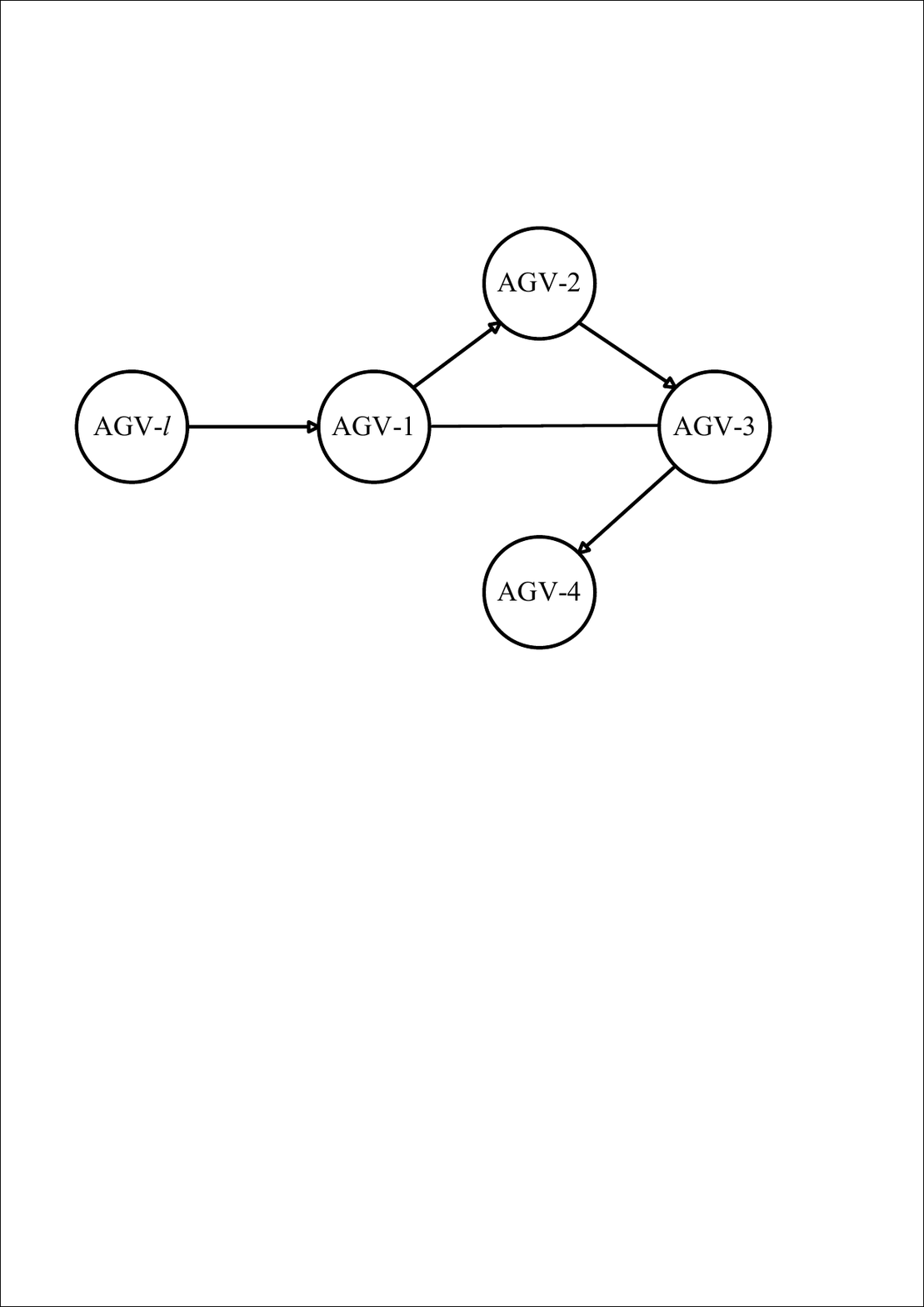}
		\caption{ Communication topology of AGVs}
		\label{ctopology3}
	\end{figure}
	
	In this example, we use Algorithm 3 to accomplish the formation task for five AGVs. The time step $\Delta=0.05$ and the prediction horizon $N_p=8$. The initial poses for AGVs are set as $\{x_l(0)=2.00,y_l(0)=0,\theta_l(0)=1.57 \}$, $\{x_1(0)=3.00,y_1(0)=0.20,\theta_1(0)=0.78\}$, $\{x_2(0)=1.20,y_2(0)=0.50,\theta_2(0)=0\}$, $\{x_3(0)=2.50,y_3(0)=0,\theta_3(0)=0.50\}$, $\{x_4(0)=1,y_4(0)=0.4,\theta_4(0)=0.8\}$ and the weight matrix $Q_{ij}=60\times I,  W_{1l}=100\times I, R_i=0.01\times I,\ i\in\{1,2,3,4\}$. In practical implementations, inter-agent communication quality is susceptible to various disturbances including environmental variations, human factors, and unexpected incidents. To explicitly evaluate these impacts, we incorporate disturbance events into the formation control simulation, where agents may experience intermittent neighbor information packet losses. This experimental design validates the algorithm's inherent disturbance resilience. As analytically established in Remark 4, the online adjustability mechanism provides robust compensation for system uncertainties and disturbances through real-time control action optimization. It should be noted that disturbance rejection is not the primary focus of this study. We only consider this simplified scenario to demonstrate the algorithm's passive disturbance resilience. For more general disturbance cases, dedicated robust control protocols would need to be designed according to the specific disturbance characteristics.
	
	Fig. \ref{ftraj} demonstrates the formation tracking trajectories of the AGVs, where the follower AGVs successfully achieve the prescribed formation and maintain coordinated motion with the leader AGV. The formation error between each AGV and its neighbors is shown in Fig. \ref{ferror}, demonstrating rapid and stable convergence to zero. The simulation results demonstrate that Algorithm 3 can effectively solve the formation problem in nonlinear multi-agent systems.
	\begin{figure}[htbp]
		\captionsetup{font={footnotesize}}
		\centering
		\includegraphics[width=0.87\linewidth]{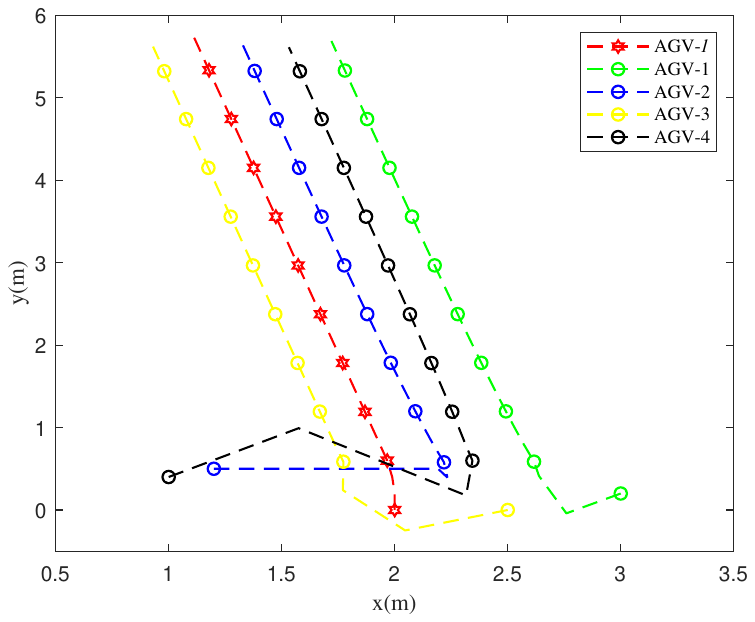}
		\caption{ Actual formation of AGVs}
		\label{ftraj}
	\end{figure}

	\begin{figure}[htbp]
		\captionsetup{font={footnotesize}}
		\centering
		\includegraphics[width=0.87\linewidth]{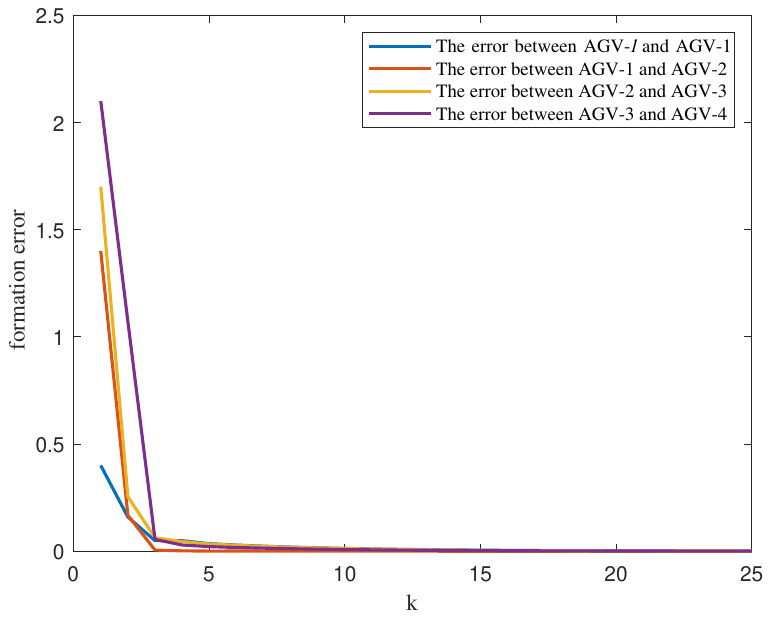}
		\caption{ Formation errors of AGVs}
		\label{ferror}
	\end{figure}
	
	\section{CONCLUSIONS}
	In this paper, we have proposed a distributed algorithm for solving the optimal consensus control problem of nonlinear multi-agent systems. The proposed algorithm exhibits fast convergence and strong stability during the iteration process. Furthermore, two enhanced algorithms have been developed under the MPC framework to solve the finite-time optimization problem in each prediction horizon, where the first part is used as the actual control input for the system. All algorithms are executed in a distributed manner and unify both the leader-follower and leaderless cases. Finally, numerical simulations has verified the effectiveness of the proposed algorithms.

\end{document}